\newif\ifnlaa
\newif\ifpgfgen
\pgfplotsset{compat=1.13}
\renewcommand{\todo}[2][]{\tikzexternaldisable\@todo[#1]{#2}\tikzexternalenable}
\newtheorem{theorem}{Theorem}
\newtheorem{lemma}{Lemma}
\theoremstyle{definition}
\newtheorem*{remark}{Remark}
\newcounter{algorithmicH}
\let\oldalgorithmic\algorithmic
\renewcommand{\algorithmic}{%
  \stepcounter{algorithmicH}
  \oldalgorithmic}
\renewcommand{\theHALG@line}{ALG@line.\thealgorithmicH.\arabic{ALG@line}}
\def\input@path{{./img/},{./data/},{./}}
\tikzset{
  nomorepostactions/.code={\let\tikz@postactions=\pgfutil@empty},
  mymark/.style 2 args={decoration={markings,
      mark= between positions 0.05 and 1 step (1/10)*\pgfdecoratedpathlength with{%
        \tikzset{#2,every mark}\tikz@options
        \pgfuseplotmark{#1}%
      },
    },
    postaction={decorate},
    /pgfplots/legend image post style={
      mark=#1,mark options={#2},every path/.append style={nomorepostactions}
    },
  },
}
\providecommand{\keywords}[1]{\textit{Keywords}: #1}
\newcommand{\Precon}{\mathcal{M}}
\newcommand{\Obj}{f}
\newcommand{\vect}[1]{\boldsymbol{#1}}
\newcommand{\Transpose}[1]{#1^\intercal}
\newcommand{\Hess}[1]{H#1}
\newcommand{\Grad}{g}
\newcommand{\vecbracks}[1]{\left[#1\right]}
\newcommand{\norm}[1]{\left\lVert#1\right\rVert}
\newcommand{\Krylov}[1]{\mathcal{K}_{#1}(A,r^{(1)})}
\newcommand{\OKrylov}[2]{\mathcal{K}_{#1}^O(#2)}
\author{Asbj{\o}rn Nilsen Riseth\thanks{riseth@maths.ox.ac.uk ---
    Mathematical Institute, University of Oxford, OX2 6GG, UK.}}
\title{Objective acceleration for unconstrained optimization}
\date{\today}
\begin{document}

\maketitle

\begin{abstract}
  Acceleration schemes can dramatically improve existing optimization procedures.
  In most of the work on these schemes, such as
  nonlinear Generalized Minimal Residual (N-GMRES), acceleration is
  based on minimizing the $\ell_2$ norm of
  some target
  on subspaces of $\mathbb{R}^n$.
  There are many numerical examples that show how accelerating general
  purpose
  and domain-specific optimizers with N-GMRES results in large
  improvements. We propose a natural modification to N-GMRES, which
  significantly improves the performance
  in a testing environment originally used to advocate
  N-GMRES. Our proposed approach, which we refer to as O-ACCEL
  (Objective Acceleration),
  is novel in that it minimizes an approximation to the \emph{objective function}
  on subspaces of $\mathbb{R}^n$.
  We prove that O-ACCEL reduces to the
  Full Orthogonalization Method for linear systems when the
  objective is quadratic, which differentiates our proposed approach
  from existing acceleration methods.
  Comparisons with L-BFGS and N-CG indicate the competitiveness
  of O-ACCEL. As it can be combined with domain-specific
  optimizers, it may also be beneficial in areas where L-BFGS or N-CG
  are not suitable.
\end{abstract}
\keywords{Optimization; Acceleration; N-GMRES; Algorithm.}

\section{Introduction}
Gradient based optimization algorithms normally iterate based on
tractable approximations to the objective function at a particular
point.
Acceleration algorithms aim to combine the strengths of existing
solvers with information from previous iterates.
We propose an acceleration
scheme that can be used on top of existing optimization algorithms,
which generates a subspace from previous iterates, over which it
aims to optimize the objective function. We call the
algorithm O-ACCEL, short for Objective Acceleration.

Our idea closely resembles the work of \citet{sterck2013steepest},
which introduced the preconditioned nonlinear GMRES (N-GMRES)
algorithm for optimization. By using a more appropriate target to
accelerate the optimization than
N-GMRES does, we show, with numerical examples, how
O-ACCEL more efficiently accelerates the steepest
descent algorithm.
When optimizing an objective $\Obj$, N-GMRES is used as an accelerator from
the point of view of solving the nonlinear system $\nabla\Obj(x)=0$
which arises from the first-order condition of optimality.
It uses the idea of Krylov subspace acceleration from
\citet{washio1997krylov} and \citet{oosterlee2000krylov} for solving nonlinear
equations that arise from discretizations of partial differential
equations.
The name N-GMRES arises from the fact that steepest descent
preconditioned N-GMRES is equivalent to the standard GMRES procedure for linear
systems of equations \citep{washio1997krylov,sterck2013steepest}.
A similar idea, also arising from nonlinear equations, was described
in \citet{anderson1965iterative} in
\citeyear{anderson1965iterative}. See \citet{walker2011anderson} for a
note on
the similarities of the methods, and \citet{fang2009two} which
puts Anderson acceleration in the context of a Broyden-type
approximation  of the inverse Jacobian.
\citet{brune2015composing} show, with many numerical
examples,
that N-GMRES and Anderson acceleration can greatly improve convergence
on nonlinear systems,
when combined with an appropriate preconditioner (nonlinear solver).
In the setting of optimization, \citet{sterck2012nonlinear} and
\citet{sterck2016nonlinearly} show large improvements in convergence
by applying N-GMRES acceleration to the computation of tensor
decompositions.

More recently, \citet{damien2016regularized} have developed another
acceleration method for convex optimization denoted regularized
nonlinear acceleration (RNA), which \citet{cartis2017accelerating}
have extended to the nonconvex case.
Acceleration techniques differ from one another in several ways,
but, for convex quadratic objectives, the
Anderson, N-GMRES and Scieur et al.\ algorithms all coincide
\citep{cartis2017accelerating}.
These methods all minimize the $\ell_2$ norm of some objective in $\mathbb{R}^n$, the
space of the decision variable.
The proposed algorithm in this manuscript instead aims to minimize the
objective function over a subspace of $\mathbb{R}^n$.
We believe this is a natural target to accelerate against, especially
when the optimization procedure is seeking descent directions.
For convex, quadratic functions we prove that O-ACCEL with a steepest
descent preconditioner reduces to the full orthogonalization method
(FOM~\cite{saad2003iterative}), a Krylov subspace
procedure for solving linear systems. This differentiates our method
from the other acceleration techniques, which are related to the
GMRES algorithm for linear systems.

Due to the close similarity with the proposed algorithm and N-GMRES, this
manuscript focuses on numerical comparisons to N-GMRES under the same testing
conditions as used by \citet{sterck2013steepest}. On the test set
from \citet{sterck2013steepest}, our
acceleration scheme compares favourably to N-GMRES, as well as
implementations of the nonlinear conjugate gradient (N-CG) and limited-memory
Broyden--Fletcher--Goldfarb--Shanno (L-BFGS) methods \citep{nocedal2006numerical}.
Further tests on the CUTEst test problem set \citep{gould2015cutest}
show that L-BFGS is more applicable to these problems, however, O-ACCEL
again performs better than N-GMRES.

The manuscript is organized as follows. Motivation for the algorithm,
and discussion around it, is covered in \Cref{sec:algo_description}.
Numerical tests that show the efficiency of our proposed acceleration
procedure  applied to
steepest descent are presented in \Cref{sec:num_experiments}. We
conclude and discuss further potential work in \Cref{sec:conclusion}.

\section{Optimization acceleration with O-ACCEL}\label{sec:algo_description}
To fix notation, consider a twice continuously differentiable function
$\Obj\in C^2(\mathbb{R}^n)$ that
is bounded below and has at least one minimizer.
We aim to find a local minima of the optimization problem
\begin{equation}
  \min_{x\in\mathbb{R}^n}\Obj(x).
\end{equation}
Let $\Precon(\Obj,x)$ denote an optimization procedure for $\Obj$ with
initial guess $x\in\mathbb{R}^n$. This optimization procedure can, for example, be the
application of one steepest descent, or Newton, step. We will refer to
$\Precon$ as the preconditioner, because it is applied in the same fashion
as a right preconditioner for iterative procedures of linear systems
\citep{brune2015composing}.
Given a sequence of previously explored iterates
$x^{(1)},\dots,x^{(k)}$, and
a proposed new guess $x^P = \Precon(\Obj,x^{(k)})$, we will try
to accelerate the next iterate $x^{(k+1)}$ towards a minimizer.
Define
\begin{equation}
  \OKrylov{k}{x^P}=\mbox{span}\{x^{(1)}-x^P,\dots,x^{(k)}-x^P\}.
\end{equation}
The acceleration step aims to minimize $\Obj$ over the
subset $x^P+\OKrylov{k}{x^P}$, which can be interpreted as a
generalisation from a line search to a hyperplane search.
Let $\alpha\in \mathbb{R}^k$, and set
\begin{equation}
  x^A(\alpha) = x^P + \sum_{j=1}^k\alpha_j(x^{(j)}-x^P).
\end{equation}
Note that, when $k=1$, minimizing $\Obj$ over $\OKrylov{k}{x^P}$ is
equivalent to the standard line search problem
of minimizing $\lambda \mapsto \Obj(x^{(1)}+\lambda (x^P-x^{(1)}))$.
The first-order condition for $\alpha$ to be a minimizer of the function
$\alpha\mapsto \Obj(x^A(\alpha))$ is
\begin{equation}
  \nabla_{\alpha} \Obj(x^P+{\textstyle\sum_{j=1}^k}\alpha_j(x^{(j)}-x^P)) =0.
\end{equation}
Define the gradient $\Grad(x)=\nabla_x\Obj(x)$. For $l=1,\dots,k$,
\begin{equation}\label{eq:deriv_objective_alpha}
  \frac{\partial }{\partial \alpha_l}
  \Obj({\textstyle x^P+\sum_{j=1}^k\alpha_j(x^{(j)}-x^P)})
  = \Transpose{{\Grad\left({\textstyle x^P+\sum_{j=1}^k\alpha_j(x^{(j)}-x^P)}\right)}}
  (x^{(l)}-x^P),
\end{equation}
where superscript $\intercal$ denotes the transpose.
The O-ACCEL algorithm aims to linearize the first-order condition $\nabla_\alpha \Obj(x^A(\alpha))=0$ in
the following way. Let $\Hess{}(x)$ denote the Hessian of $\Obj$
at $x$. By linearizing $\alpha\mapsto\nobreak\Grad(x^A(\alpha))$, we get
\begin{align}\label{eq:grad_xA_approx}
  \begin{split}
    \Grad\left({\textstyle x^P+\sum_{j=1}^k\alpha_j(x^{(j)}-x^P)}\right)
    &\approx
    \Grad(x^P) + \Hess{}(x^P)
    \sum_{j=1}^k\alpha_j(x^{(j)}-x^P)\\
    &= \Grad(x^P)
    +\Hess{}(x^P)(\vect{X}-\vect{X}^P)\alpha,
  \end{split}
\end{align}
where we use the matrices
$\vect{X} = \vecbracks{x^{(1)},\dots,x^{(k)}}\in\mathbb{R}^{n\times k}$ and
$\vect{X}^P = \vecbracks{x^P,\dots,x^P}\in\mathbb{R}^{n\times k}$.
Given this linearization we aim to find an $\alpha\in\mathbb{R}^k$
that approximately satisfies the first-order condition. 
We can do this by
combining~\eqref{eq:deriv_objective_alpha} and~\eqref{eq:grad_xA_approx}, and then
look for an
$\alpha\in\mathbb{R}^k$ that solves
\begin{equation}
  \Transpose{\alpha}\Transpose{{(\vect{X}-\vect{X}^P)}}\Hess{}(x^P)
  (x^{(l)}-x^P) = -\Transpose{{\Grad(x^P)}}(x^{(l)}-x^P),\qquad l=1,\dots,k.
\end{equation}
In matrix form, the system of equations becomes
\begin{equation}\label{eq:ngmres_eq_hess}
  \Transpose{{\left(\vect{X}-\vect{X}^P\right)}}\Hess{}(x^P)
  \left(\vect{X}-\vect{X}^P\right)
  \alpha
  = -\Transpose{{(\vect{X}-\vect{X}^P)}}{\Grad(x^P)}.
\end{equation}

There are cases where we may not wish to compute the Hessian of
$\Obj$ explicitly, for example, if $\Precon$ does not use it.
We can instead use an approximation $\tilde{\Hess{}}(x^P)$ of the Hessian
$\Hess{}(x^P)$, or its action on vectors in
$\OKrylov{k}{x^P}$.
The iterative Hessian approximation algorithms that are used in
quasi-Newton methods can provide one avenue of research.
In the numerical experiments provided in this manuscript, we instead
focus on approximating the action of the Hessian on
$\OKrylov{k}{x^P}$ to first order by
\begin{equation}\label{eq:approx_hess}
  \Hess{}(x^P) (x^{(l)}-x^P)
  \approx \Grad(x^{(l)})-\Grad(x^P).
\end{equation}
Let $\Grad(\vect{X}) =
\vecbracks{\Grad(x^{(1)}),\dots,\Grad(x^{(k)})}$, and
define $\Grad(\vect{X}^P)$ similarly.
This gives a second approximation to the first-order conditions,
\begin{equation}\label{eq:ngmres_eq_grad}
  \Transpose{{(\vect{X}-\vect{X}^P)}}\left(
    \Grad(\vect{X})-\Grad(\vect{X}^P)
  \right)
  \alpha
  = -\Transpose{{(\vect{X}-\vect{X}^P)}}{\Grad(x^P)}.
\end{equation}
In this manuscript, we investigate the performance of the objective-based
acceleration using~\eqref{eq:ngmres_eq_grad}.

To contrast our work with the N-GMRES optimization algorithm in
\citet{sterck2013steepest}, minimizing the
$\ell_2$ norm of the approximation of $\Grad(x^A)$
established from~\eqref{eq:grad_xA_approx} and~\eqref{eq:approx_hess} results in
the linear least squares problem
\begin{equation}
  \min_{\alpha\in\mathbb
    {R}^k}\Bigl\|\Grad(x^P)+\sum_{j=l}^k\alpha_l(\Grad(x^{(l)})-\Grad(x^P))\Bigr\|_2.
\end{equation}
Its solution can be found from the normal equation
\begin{equation}\label{eq:ngmres_lin_sys}
  \Transpose{{\left(\Grad(\vect{X})-\Grad(\vect{X}^P)\right)}}\left(
    \Grad(\vect{X})-\Grad(\vect{X}^P)
  \right)
  \alpha
  = -\Transpose{{\left(\Grad(\vect{X})-\Grad(\vect{X}^P)\right)}}{\Grad(x^P)}.
\end{equation}

We argue that the O-ACCEL algorithm is more appropriate for an
optimization problem than N-GMRES. 
When we are restricted to subsets of the
decision space, reduction in the value of the objective is a better
indicator of moving towards a minimizer than reduction in the gradient norm. In
effect, N-GMRES ignores the extra information provided by $\Obj$.
This is better illustrated in the case when $k=1$, where it is standard
to perform a line search on the objective rather than the gradient norm.

\subsection{Algorithm}
The proposed acceleration procedure,
which we call O-ACCEL, is described in \Cref{alg:ngmreso}.
The number of stored previous iterates $w$ denotes the history size.
Setting an upper bound on the history size can be necessary due to storage
constraints, or to prevent the local approximations
of~\eqref{eq:grad_xA_approx} and~\eqref{eq:approx_hess} from using iterates far away
from
$x^P$. If the direction from $x^P$ to the accelerated step $x^A$ is
not a descent direction, it indicates that the linearized
approximation around $x^P$ is bad for the currently stored iterates.
For simplicity, we therefore choose to reset the history size to $w=1$
when we encounter such cases.
\begin{algorithm}[htb]
  \caption{The O-ACCEL algorithm}\label{alg:ngmreso}
  \begin{algorithmic}[1]
    \Procedure{OACCEL}{$x^{(1)},\dots,x^{(w)}$}
    \State $x^P\gets \Precon(\Obj,x^{(w)})$
    \State Approximate $\Hess{}(x^P)\approx \tilde{\Hess{}}$, or
    its action
    \State $A\gets
    \Transpose{{\left(\vect{X}-\vect{X}^P\right)}}\tilde{\Hess{}}
    \left(\vect{X}-\vect{X}^P\right)$
    \hfill \Cref{sec:num_experiments}: $A\gets\Transpose{{(\vect{X}-\vect{X}^P)}}\left(
      \Grad(\vect{X})-\Grad(\vect{X}^P)
    \right)$
    \State $b\gets -\Transpose{{(\vect{X}-\vect{X}^P)}}{\Grad(x^P)}$
    \State Solve  $A\alpha = b$
    \State $x^A \gets x^P + \sum_{j=1}^w\alpha_j(x^{(j)}-x^P)$
    \If{$x^A-x^P$ is a descent direction}\label{algline:descent_dir}
    \State $x^{(w+1)}\gets \textnormal{linesearch}(x^P+\lambda(x^A-x^P))$\label{algli:xp_xa_ls}
    \State \texttt{reset} $\gets$ \texttt{false}
    \Else
    \State $x^{(w+1)}\gets x^P$
    \State \texttt{reset} $\gets$ \texttt{true}
    \EndIf
    \State \Return $(x^{(w+1)},\texttt{reset})$
    \EndProcedure
  \end{algorithmic}
\end{algorithm}

To prevent re-computation of $\Grad(x^{(j)})$ for $j=1,\dots,w$ in each application of
the procedure, we store these vectors for later use.
The computational cost of the algorithm is approximately the same as
$w$-history L-BFGS with two-loop recursion \citep{sterck2013steepest}. 
In terms of storage, O-ACCEL and L-BFGS both store $2w$ vectors
of size $n$. In addition, our implementation of O-ACCEL, as described
in \Cref{alg:ngmreso_impl} below, reduces the number of flops
required by storing a $w\times w$ matrix of previously calculated
values. For the numerical experiments we have used $w=20$, in
accordance with \citet{sterck2013steepest}.
It was, however, shown by \citet{sterck2013steepest} that N-GMRES can
already provide good results with $w=3$.
Tests using O-ACCEL with $w=5$, although not included here, provide
almost as good results as reported in \Cref{sec:num_experiments}.
Note that, if the Hessian is sparse, it may be more storage efficient
to find $\alpha$ from the linear system in~\eqref{eq:ngmres_eq_hess}
than using a large $w$.

\begin{remark}
  The RNA approach suggested by \citet{damien2016regularized}
  is called separately from the iterations by the optimizer, when
  judged appropriate. This contrasts with \Cref{alg:ngmreso}, where
  the acceleration happens at each iteration.
  The O-ACCEL
  acceleration can be applied separately in the same fashion
  as in \citet{damien2016regularized},
  however, this is not considered in this manuscript.
\end{remark}

\subsection{O-ACCEL as a full orthogonalization method (FOM)}
The optimality condition~\eqref{eq:deriv_objective_alpha} for the
function $\alpha\mapsto \Obj(x^A(\alpha))$ is
$\Transpose{\Grad(x^A)}(x^{(l)}-x^P)=0$, for $l=1,\dots,k$.
Hence, we look for $x^A\in x^P+\OKrylov{k}{x^P}$ so that
$\Grad(x^A)\perp \OKrylov{k}{x^P}$.
This condition reduces to FOM \citep{saad2003iterative} when
$\Grad(x)$ is linear and $\Precon(\Obj,x)$ is a steepest descent
algorithm.
When the Hessian is symmetric
positive-definite, FOM is mathematically equivalent to the conjugate
gradient method. We can therefore think of O-ACCEL as a
N-CG method that approximates the orthogonality condition with a
larger history size.

The FOM is an iterative procedure for solving a linear system $Ax=b$.
With initial guess $x^{(1)}$ and residual $r^{(1)}=b-Ax^{(1)}$,
define the Krylov subspace
\begin{equation}
  \Krylov{k}=\mbox{span} \{r^{(1)},Ar^{(1)},\dots,A^{k-1}r^{(1)}\}.
\end{equation}
The iterate $x^{(k+1)}$ of FOM is an element in
$x^{(1)}+\Krylov{k}$ such that
$b-Ax^{(k+1)}\perp \Krylov{k}$.

For convex, quadratic objectives
$\Obj(x)=\frac{1}{2}\Transpose{x}Ax-\Transpose{x}b$,
the gradient $\Grad(x)=Ax-b$ is linear and
the optimum must satisfy the equation $Ax=b$.
The residuals $r^{(k)}=b-Ax^{(k)}$ are equal to the negative gradient
$-\Grad(x^{(k)})$. Therefore, O-ACCEL with a steepest descent
preconditioner yields
$x^P = \Precon(\Obj,x^{(k)}) =  x^{(k)}+\lambda^{(k)}r^{(k)}$ for some
$\lambda^{(k)}>0$.

\begin{theorem}\label{thm:oaccel_fom}
  Let $\Precon$ be a steepest descent preconditioner and
  $\Obj(x)=\frac{1}{2}\Transpose{x}Ax-\Transpose{x}b$.
  Let the O-ACCEL algorithm
  take the step $x^{(w+1)}=x^A$ in
  \Cref{algli:xp_xa_ls} of \Cref{alg:ngmreso}.
  Then the iterates of the O-ACCEL algorithm
  form the FOM sequence of the linear system $Ax=b$.
\end{theorem}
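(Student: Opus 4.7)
The plan is to proceed by induction on $k$, showing that the O-ACCEL iterates $x^{(1)},\dots,x^{(k)}$ coincide with the first $k$ FOM iterates, under the standard FOM no-breakdown hypothesis. The base case $k=1$ is immediate: $\OKrylov{1}{x^P}=\mbox{span}\{x^{(1)}-x^P\}=\mbox{span}\{r^{(1)}\}$, and $x^A$ is the unique element of $x^{(1)}+\mbox{span}\{r^{(1)}\}$ whose gradient is orthogonal to $r^{(1)}$, which is exactly the first FOM update.

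For the inductive step I would compare the two optimality characterizations. FOM picks $x^{(k+1)}$ as the unique element of $x^{(1)}+\Krylov{k}$ satisfying $b-Ax^{(k+1)}\perp\Krylov{k}$, whereas $x^A$ from \Cref{alg:ngmreso} is the unique element of $x^P+\OKrylov{k}{x^P}$ satisfying $\Grad(x^A)\perp\OKrylov{k}{x^P}$, i.e.\ $b-Ax^A\perp\OKrylov{k}{x^P}$. The theorem therefore reduces to showing the two affine subspaces coincide.

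For the containment $\OKrylov{k}{x^P}\subseteq\Krylov{k}$, the induction hypothesis gives $x^{(j)}-x^{(1)}\in\Krylov{k-1}$ for $j\le k$, so $r^{(k)}=r^{(1)}-A(x^{(k)}-x^{(1)})\in\Krylov{k}$, and hence $x^P-x^{(1)}=(x^{(k)}-x^{(1)})+\lambda^{(k)}r^{(k)}\in\Krylov{k}$; each $x^{(j)}-x^P$ then lies in $\Krylov{k}$. For the reverse inclusion I would use a dimension count: under no breakdown, $\{x^{(j+1)}-x^{(j)}\}_{j=1}^{k-1}$ is a basis of $\Krylov{k-1}$, and a lower triangular change of basis gives the same for $\{x^{(j)}-x^{(k)}\}_{j=1}^{k-1}$. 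Because $x^{(k)}-x^{(1)}$ carries a nonvanishing $A^{k-2}r^{(1)}$ coefficient, $r^{(k)}$ contributes a nonzero $A^{k-1}r^{(1)}$ component, so adjoining $-\lambda^{(k)}r^{(k)}$ produces a basis of $\Krylov{k}$; a second triangular identification matches this with $\{x^{(j)}-x^P\}_{j=1}^k$, giving $\OKrylov{k}{x^P}=\Krylov{k}$. Together with $x^P\in x^{(1)}+\Krylov{k}$, the affine subspaces coincide and uniqueness of the FOM iterate yields $x^A=x^{(k+1)}$, closing the induction.

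The main obstacle is this dimension step: the one-sided containment alone is insufficient, because it does not guarantee that the two orthogonality conditions pin down the same point. One must explicitly verify that $\OKrylov{k}{x^P}$ attains full dimension $k$, which relies on the standard no-breakdown assumption for the underlying Krylov sequence; all remaining pieces are bookkeeping that follow once the subspace identity is in place.
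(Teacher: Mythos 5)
Your proposal is correct and follows essentially the same route as the paper: induction on the iterates, with the key step being the identification of the O-ACCEL search set $x^P+\OKrylov{k}{x^P}$ with the FOM affine space $x^{(1)}+\Krylov{k}$, after which the first-order condition~\eqref{eq:deriv_objective_alpha} (which the algorithm enforces exactly here, since the linearizations~\eqref{eq:grad_xA_approx} and~\eqref{eq:approx_hess} are exact for quadratic $\Obj$) is precisely the FOM Galerkin condition, so uniqueness pins down the same point. The paper packages exactly this subspace identification as \Cref{lem:fom_xp}. The one place you genuinely deviate is in how you obtain the reverse inclusion $\Krylov{k}\subseteq\OKrylov{k}{x^P}$: you argue by a dimension count and assert that $x^{(k)}-x^{(1)}$ carries a nonvanishing $A^{k-2}r^{(1)}$ coefficient, which is the only step left unsubstantiated in your sketch. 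The paper gets this more directly from the FOM orthogonality property: $r^{(k)}=b-Ax^{(k)}$ satisfies $r^{(k)}\perp\Krylov{k-1}$ and $r^{(k)}\in\Krylov{k}$, so if $r^{(k)}\neq 0$ it cannot lie in $\Krylov{k-1}$ and adjoining $x^P-x^{(k)}=\lambda^{(k)}r^{(k)}$ enlarges the span by exactly one dimension, with no coefficient bookkeeping. Note that this same observation is what proves your asserted claim (if $x^{(k)}-x^{(1)}\in\Krylov{k-2}$, then $r^{(k)}=r^{(1)}-A(x^{(k)}-x^{(1)})\in\Krylov{k-1}$, and orthogonality would force $r^{(k)}=0$, i.e.\ convergence), so you may as well use it directly. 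A small point in your favour: you state the no-breakdown/non-convergence hypothesis explicitly, which the paper leaves implicit in the spanning assumption of its lemma.
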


We shall shortly prove the theorem after deriving new expressions for $\Krylov{k}$.
First, note that for any $x$, a reordering of terms can show that
\begin{align}\label{eq:okrylov_2}
  \OKrylov{k}{x}
  &=
    \mbox{span}\{x-x^{(k)},x^{(k-1)},\dots,x^{(2)}-x^{(1)}\},\\
  x+\OKrylov{k}{x}
  &= x^{(1)} +
    \mbox{span}\{x-x^{(k)},x^{(k-1)},\dots,x^{(2)}-x^{(1)}\}.
    \label{eq:x_okrylov_2}
\end{align}
This motivates the next lemma, which connects the space on the right
hand side of~\eqref{eq:okrylov_2} to $\Krylov{k+1}$.
\begin{lemma}\label{lem:fom_xp}
  Let $x^{(1)},\dots,x^{(k)}$ be a given sequence of FOM iterates for
  a linear system $Ax=b$.
  Assume that
  $\mbox{span}(x^{(k)}-x^{(k-1)},\dots,x^{(2)}-x^{(1)})=\Krylov{k}$,
  and let $x^P = x^{(k)}+\lambda r^{(k)}$ for some
  $\lambda>0$.
  Then,
  \begin{equation}
    \mbox{span}\{x^P-x^{(k)},x^{(k)}-x^{(k-1)},\dots,x^{(2)}-x^{(1)}\}
    = \Krylov{k+1}.
  \end{equation}
\end{lemma}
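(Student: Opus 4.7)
The plan is to observe that, by definition of $x^P$, $x^P - x^{(k)} = \lambda r^{(k)}$, so the span in question rewrites as $\Krylov{k} + \mbox{span}\{r^{(k)}\}$ after applying the hypothesis. The lemma thus reduces to proving $\Krylov{k} + \mbox{span}\{r^{(k)}\} = \Krylov{k+1}$, which I would establish by a forward containment plus a dimension count.

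For the forward inclusion, I would rewrite $r^{(k)} = b - Ax^{(k)} = r^{(1)} - A(x^{(k)} - x^{(1)})$. Since $x^{(k)} - x^{(1)}$ is a telescoping sum of the consecutive differences $x^{(j+1)}-x^{(j)}$ appearing in the hypothesis, it lies in $\Krylov{k}$. Hence $A(x^{(k)}-x^{(1)}) \in A\Krylov{k} \subseteq \Krylov{k+1}$, and combining with $r^{(1)}\in\Krylov{1}\subseteq\Krylov{k+1}$ yields $r^{(k)}\in\Krylov{k+1}$. Together with $\Krylov{k}\subseteq\Krylov{k+1}$, this gives $\Krylov{k} + \mbox{span}\{r^{(k)}\} \subseteq \Krylov{k+1}$.

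The main obstacle is promoting this inclusion to equality, i.e.\ showing that $r^{(k)}$ supplies a direction strictly outside $\Krylov{k}$, so that a dimension count produces the result. This is the step where the defining FOM property is indispensable: by construction the FOM residual is orthogonal to the Krylov subspace over which the corresponding iterate is minimised, and, so long as FOM has not already converged, this forces $r^{(k)}$ to be genuinely new relative to the previously explored subspace. Then $\dim\bigl(\Krylov{k}+\mbox{span}\{r^{(k)}\}\bigr) = k+1 = \dim\Krylov{k+1}$, which upgrades the forward containment to equality. The degenerate case $r^{(k)}=0$ corresponds to $x^{(k)}$ already solving $Ax=b$, where the claim holds trivially, so only the non-degenerate case requires the argument above.
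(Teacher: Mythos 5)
Your proof follows essentially the same route as the paper's: rewrite $x^P-x^{(k)}=\lambda r^{(k)}$, show $r^{(k)}\in\Krylov{k+1}$ via $r^{(k)}=r^{(1)}-A(x^{(k)}-x^{(1)})$ with $x^{(k)}-x^{(1)}\in\Krylov{k}$, and use the FOM orthogonality $r^{(k)}\perp\Krylov{k}$ to conclude $\mbox{span}\{r^{(k)},\Krylov{k}\}=\Krylov{k+1}$ before substituting back. The only difference is cosmetic: you justify the final span equality with an explicit dimension count and a separate $r^{(k)}=0$ case, whereas the paper simply asserts it.
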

\begin{proof}
  By definition of $x^P$ and the properties of the FOM sequence,
  \begin{equation}\label{eq:xp_xk_rk}
    x^{P}-x^{(k)} = \lambda r^{(k)}\perp \Krylov{k}.
  \end{equation}
  As $x^{(k)}\in x^{(1)}+\Krylov{k}$, we have
  $r^{(k)}\in\Krylov{k+1}$ because
  \begin{align}
    r^{(k)} &\in b - A(x^{(1)} +\Krylov{k})
    = r^{(1)} - A\Krylov{k}
    \in \Krylov{k+1}.
  \end{align}
  Therefore, $\mbox{span}\{r^{(k)},\Krylov{k}\}=\Krylov{k+1}$.
  This equality yields the result by replacing $r^{(k)}$ and
  $\Krylov{k}$ with~\eqref{eq:xp_xk_rk}
  and $\mbox{span}\{x^{(k)}-x^{(k-1)},\dots,x^{(2)}-x^{(1)}\}$.
\end{proof}

\begin{proof}[Proof of \Cref{thm:oaccel_fom}]
  We prove the result by induction on the
  sequence $x^{(1)},\dots,x^{(k)}$ arising from the O-ACCEL algorithm.
  Let $k=2$, then
  \begin{align}
    x^{(2)}
    &= x^P + \alpha^{(1)}(x^{(1)}-x^P)\\
    &= x^{(1)} + \lambda^{(1)}r^{(1)} -
      \alpha^{(1)}\lambda^{(1)}r^{(1)}
      \in x^{(1)} + \Krylov{1}.
  \end{align}
  and so $\mbox{span}\{x^{(2)}-x^{(1)}\} = \Krylov{1}$.
  From~\eqref{eq:deriv_objective_alpha} the residual
  $b-Ax^{(2)}\perp x^P-x^{(1)} = \lambda^{(k)}r^{(1)}$, and thus
  $x^{(2)}$ is the second FOM iterate. This establishes the base case
  for the induction proof.

  The inductive step follows from \Cref{lem:fom_xp}
  together with~\eqref{eq:okrylov_2} and~\eqref{eq:x_okrylov_2},
  and hence proves that
  the O-ACCEL iterates are the FOM iterates
  for $Ax=b$.
\end{proof}

\begin{remark}
  The connection to the FOM differentiates O-ACCEL from
  N-GMRES, Anderson acceleration, and RNA, which
  reduce to GMRES for quadratic objectives.
\end{remark}

\section{Numerical experiments}\label{sec:num_experiments}
In order to investigate the performance of the proposed algorithm, we
implement it with two preconditioners $\Precon$. The first is steepest
descent with line search, and the second is steepest descent with a
fixed step length.
They are compared to the N-GMRES algorithm with the
same preconditioners, and implementations of the nonlinear conjugate
gradient (N-CG) variant with the Polak-Ribi\`{e}re update formula, and the
two-loop recursion version of the limited-memory
Broyden--Fletcher--Goldfarb--Shanno (L-BFGS) method
\citep{nocedal2006numerical}.
The test problems considered in
Sections~\ref{subsec:test_problems_sterck} to
\ref{subsec:tensor_optim} are the same eight problems that were used
in \citet{sterck2013steepest} to advocate N-GMRES. We also include
experiments from \num{33} CUTEst problems to further test the
applicability of the algorithms.
The results are presented in the
form of performance profiles, as introduced by
\citet{dolan2002benchmarking}, based on the number of
function/gradient evaluations.

The main focus of this manuscript is to compare the performance of the
proposed algorithm to the N-GMRES algorithm. To this end, we have used
the MATLAB implementation of this algorithm, available
online.\footnote{\url{http://www.hansdesterck.net/Publications-by-topic/nonlinear-preconditioning-for-nonlinear-optimization}}
The O-ACCEL implementation, and the rest of the code required to
generate the test result data, is also made available by the
author.\footnote{\url{https://github.com/anriseth/objective_accel_code}}
Our implementation of O-ACCEL follows the exact same steps, only
replacing the calculations needed to solve the N-GMRES system
in~\eqref{eq:ngmres_lin_sys}
with those of the linear system in~\eqref{eq:ngmres_eq_grad}.
The implementation is detailed in \Cref{alg:ngmreso_impl}. It closely
follows the instructions from
\citet{washio1997krylov}, including a regularization for the
linear system.

The regularization is used prevent the direct linear solver we use to
find $\alpha$ from crashing when $A$ is ill-conditioned or singular,
which can happen if the vectors $g(x^{(k)})-g(x^P)$ are linearly
dependent.
Let $A\in\mathbb R^{w\times w}$ denote the system matrix $\Transpose{{(\vect{X}-\vect{X}^P)}}\left(
  \Grad(\vect{X})-\Grad(\vect{X}^P)
\right)$.
Then, for some tolerance $\epsilon_0>0$, set
$\epsilon=\epsilon_0\cdot\max{\{A_{ii}\}}_{i=1}^w$. The $\max$ term
is used to scale the regularization in accordance with the
optimisation problem.
With $I\in\mathbb{R}^{w\times w}$ the identity matrix, we solve
the linear problem
\begin{equation}
  (A+\epsilon I)\alpha = b,
\end{equation}
rather than the linear problem $A\alpha=b$ as defined in
\Cref{alg:ngmreso}.
This is a Tikhonov type regularization~\cite{neumaier1998solving},
often employed to regularize ill-conditioned problems.
\citet{washio1997krylov} shows that the error in the resulting
$\alpha$ is negligible for the N-GMRES
problem~\eqref{eq:ngmres_lin_sys} provided $\epsilon$ is much smaller
than the smallest non-zero eigenvalue of the system matrix.
The error for the O-ACCEL system can be analysed within a general
Tikhonov regularization framework, see, for example,
\citet{neumaier1998solving}. We do not investigate the impact of the
regularization parameter further in this manuscript, and use the
value $\epsilon_0=10^{-14}$ that was used in the N-GMRES code by
\citet{sterck2013steepest}.

\begin{algorithm}[htb]
  \caption{Implementation of O-ACCEL
    algorithm. Indentation and curly brackets denote scope.}\label{alg:ngmreso_impl}
  \hspace*{\algorithmicindent} \textbf{Input}:
  $\Obj$, $\Grad$, $\Precon$, $x$, $w_{\textnormal{max}}$, $\epsilon_0$, tolerance
  description\\
  \hspace*{\algorithmicindent} \textbf{Output}:
  $x$ satisfying tolerance description
  \begin{algorithmic}[1]
    \While{Not reached tolerance}
    \State $\texttt{x}_1 \gets x$ ; $\texttt{r}_1\gets \Grad(x)$ ; $\texttt{q}_{11}\gets
    \Transpose{x}\texttt{r}_1$
    \State $w\gets 1$ ; $k\gets 0$ ; \texttt{reset} $\gets$ \texttt{false}
    \While{\texttt{reset} is \texttt{false}}
    \State $k\gets k+1$
    \State $x \gets \Precon(\Obj,x)$ ; $r\gets \Grad(x)$
    \If{reached tolerance}
    \State \textbf{break}
    \EndIf
    \State $\eta \gets \Transpose{x}{r}$
    \State
    \textbf{for} $i=1,\dots,w$
    \{ $\xi^{(1)}_i\gets
    \Transpose{\texttt{x}_i}r$ ; $
    \xi^{(2)}_i\gets
    \Transpose{x}\texttt{r}_i$ ; $
    \texttt{b}_i\gets \eta-\xi^{(1)}_i$ \}
    \State
    \textbf{for} $i=1,\dots,w$ \{
    \textbf{for} $j = 1,\dots, w$
    \{ $\texttt{A}_{ij}\gets \texttt{q}_{ij}-\xi^{(1)}_i-\xi^{(2)}_j+\eta$ \} \}
    \State $\epsilon \gets \epsilon_0 \cdot \max\{\texttt{A}_{11,}\dots,\texttt{A}_{ww}\}$
    \State Solve
    $
    \begin{pmatrix}
      \texttt{A}_{11}+\epsilon&\cdots&\texttt{A}_{1w}\\
      \vdots&\ddots&\vdots\\
      \texttt{A}_{w1}&\cdots&\texttt{A}_{ww}+\epsilon
    \end{pmatrix}
    \begin{pmatrix}
      \alpha_1\\\vdots\\\alpha_w
    \end{pmatrix}
    = \begin{pmatrix}
      \texttt{b}_1\\\vdots\\\texttt{b}_w
    \end{pmatrix}$
    \State $x^A\gets  x +  \sum_{i=1}^w \alpha_i(\texttt{x}_i-x)$
    \State $d\gets x^A-x$
    \If{$\Transpose{d}r \geq 0$}
    \State \texttt{reset} $\gets$ \texttt{true}
    \Else
    \State $x\gets \textnormal{linesearch}(x+\lambda d)$
    \State $w\gets \min(w+1,w_{\textnormal{max}})$
    \State $j \gets (k \mod w_{\textnormal{max}}) + 1 $
    \State $\texttt{x}_j\gets x$
    \State $\texttt{r}_j\gets \Grad(x)$
    \State \textbf{for} $i=1,\dots,w$
    \{ $\texttt{q}_{ij}\gets \Transpose{\texttt{x}_i}\texttt{r}_j$ ; $\texttt{q}_{ji}\gets \Transpose{\texttt{x}_j}\texttt{r}_i$ \}
    \EndIf
    \EndWhile
    \EndWhile
  \end{algorithmic}
\end{algorithm}

For the remainder of the section, we present the test problems,
provide details for the parameter choices, and discuss the test results.

\subsection{Test problems from \citeauthor{sterck2013steepest}}\label{subsec:test_problems_sterck}
We describe the seven test problems from \citet{sterck2013steepest}.
All the functions are defined as $\Obj:\mathbb{R}^n\to\mathbb{R}$, and
the matrices mentioned are all in $\mathbb{R}^{n\times n}$.

Problem A. Quadratic objective function with symmetric, positive
definite diagonal matrix $D$,
\begin{align}
  \begin{split}
    \Obj(x)&= {\textstyle\frac{1}{2}}\Transpose{{(x-x^*)}}D(x-x^*),\text{ where }\\
    D&=\mbox{diag}(1,2,\dots,n), \text{ and }\\
    x^*&=\vecbracks{1,\dots,1}.
  \end{split}
\end{align}
The minimizer $x^*$ of Problem A is unique, with $\Obj(x^*)=0$. The
gradient is given by $\Grad(x)=D(x-x^*)$.

Problem B. Problem A with paraboloid coordinate transformation,
\begin{align}
  \begin{split}
    \Obj(x)&= {\textstyle\frac{1}{2}}\Transpose{{y(x-x^*)}}Dy(x-x^*),\text{ where }\\
    D&=\mbox{diag}(1,2,\dots,n),\\
    x^*&=\vecbracks{1,\dots,1}, \text{ and }\\
    y_1(z)&=z_1 \text{ and } y_j(z)=z_j-10z_1^2 \quad (i=2,\dots,n).
  \end{split}
\end{align}
The minimizer is again $x^*$, with $\Obj(x^*)=0$. The gradient is
$\Grad(x) = Dy(x-x^*) - 20(x_1-x_1^*) \times\newline \left( \sum_{j=2}^n{(Dy(x-x^*))}_j
\right)
\Transpose{\vecbracks{1,0,\dots,0}}$.

Problem C. Problem B with a random nondiagonal matrix $T$ with condition
number $n$,
\begin{align}
  \begin{split}
    \Obj(x)&={\textstyle\frac{1}{2}}\Transpose{{y(x-x^*)}}Ty(x-x^*),\text{ where }\\
    x^*&=\vecbracks{1,\dots,1},\\
    y_1(z)&=z_1 \text{ and } y_j(z)=z_j-10z_1^2 \quad (i=2,\dots,n),
    \text { and}\\
    T&=Q\,\mbox{diag}(1,2,\dots,n)\,\Transpose{Q},
  \end{split}
\end{align}
where $Q$ is a random orthogonal matrix.
As in Problems A and B, the minimizer is $x^*$ with $\Obj(x^*)=0$. The
gradient is
$\Grad(x) = Ty(x-x^*) - 20(x_1-x_1^*) \times \left( \sum_{j=2}^n{(Ty(x-x^*))}_j
\right)
\Transpose{\vecbracks{1,0,\dots,0}}$.

Problem D. Extended Rosenbrock function, Problem (21) from
\citet{more1981testing},
\begin{align}
  \begin{split}
    \Obj(x)&={\textstyle\frac{1}{2}}\sum_{j=1}^n {t_j(x)}^2,\text{ where $n$ is even,}\\
    t_j&=10(x_{j+1}-x_j^2)\qquad\text{($j$ odd), and}\\
    t_j&=1-x_{j-1}\qquad\qquad\text{($j$ even).}
  \end{split}
\end{align}
The unique minimum $\Obj(x^*)=0$ is attained at $x^*=\vecbracks{1,\dots,1}$.
The derivative can be computed using $g_k(x)=\sum_{j=1}^n t_j
\frac{\partial t_j}{\partial x_k}$, ($k=1,\dots,n$).
Gradients for Problems E-G can be computed in similar fashion.

Problem E. Extended Powell singular function, Problem (22) from
\citet{more1981testing},
\begin{align}
  \begin{split}
    \Obj(x)&={\textstyle\frac{1}{2}}\sum_{j=1}^n {t_j(x)}^2,\text{
      where $n$ is a multiple of
      4,}\\
    t_{4j-3} &= x_{4j-3}+10x_{4j-2},\\
    t_{4j-2} &= \sqrt{5}(x_{4j-1}-x_{4j}),\\
    t_{4j-1} &= {(x_{4j-2}-2x_{4j-1})}^2,\\
    t_{4j} &= \sqrt{10}{(x_{4j-3}-x_{4j})}^2
    \qquad \text{ for $j = 1,\dots,n/4$.}
  \end{split}
\end{align}
The unique minimum $\Obj(x^*)=0$ is attained at $x^*=0$.

Problem F. The Trigonometric function, Problem (26) from
\citet{more1981testing},
\begin{align}
  \begin{split}
    \Obj(x)&={\textstyle\frac{1}{2}}\sum_{j=1}^n {t_j(x)}^2,\text{ where}\\
    t_j &= n  + j(1-\cos x_j) - \sin x_j -\sum_{i=1}^n\cos(x_i).
  \end{split}
\end{align}
The unique minimum $\Obj(x^*)=0$ is attained at $x^*=0$.
Note that in \citet{sterck2013steepest}, a minus sign is used in front
of $j(1-\cos
x_j)$. We follow the original formulation of \citet{more1981testing}.

Problem G. Penalty function I, Problem (23) from
\citet{more1981testing},
\begin{align}
  \begin{split}
    \Obj(x)&={\textstyle\frac{1}{2}}\Bigl(  {t_0(x)}^2 +
    \sum_{j=1}^n {t_j(x)}^2\Bigr),\text{ where}\\
    t_0 &= -0.25 + \sum_{j=1}^nx_j^2, \text{ and}\\
    t_j &= \sqrt{10^{-5}}(x_j-1)\qquad (j=1,\dots,n).
  \end{split}
\end{align}
The minimum is not known explicitly for Problem G, and depends on the
value of $n$.

\subsection{Experiment design}\label{subsec:experiment_design}
We test the N-GMRES and O-ACCEL algorithms with two steepest descent
preconditioners
\begin{align}
  \Precon_{\text{Z}}(\Obj,x) &= x-\lambda_{\text{Z}} \frac{\Grad(x)}{\norm{\Grad(x)}_2}, \qquad
                            \text{with Z = A, B, and}\nonumber\\
  \lambda_{\text{A}} &= \text{determined by line search},\\
  \lambda_{\text{B}} &= \min(\delta,\norm{\Grad(x)}_2).
\end{align}
Thus, the two preconditioners only differ in the choice of step
length. Option A employs a globalizing strategy with a chosen line
search, whilst option B takes a predetermined step length.
By choosing a short, predetermined step length $\delta>0$, we expand
the subspace to search for $\alpha$ and stay close to the
previous iterate $x^{(k)}$, hopefully improving the linearizations
in~\eqref{eq:grad_xA_approx} and~\eqref{eq:approx_hess}.
For the experiments, we use the line search algorithm by
\citet{more1994line}, which satisfies the Wolfe conditions
\citep{nocedal2006numerical}.
It is both employed for $\Precon_{\text{A}}$, and in the line search
$x^P+\lambda(x^A-x^P)$  between the preconditioned step $x^P$ and the
accelerated step $x^A$ of the N-GMRES and O-ACCEL routines.

To closely follow the testing conditions of
\citet{sterck2013steepest}, we use the N-CG, L-BFGS and
Mor\'{e}-Thuente line search implementations from the Poblano toolbox
by \citet{dunlavy2010poblano}.
These may not be state of the art implementations, however, the main
focus of this manuscript is to investigate the performance of the
N-GMRES and O-ACCEL algorithms. Future work will include testing the
O-ACCEL algorithm with appropriate preconditioners on more
comprehensive test sets, against state of the art implementations of
gradient based optimization algorithms.

All optimization procedures employ the Mor\'{e}-Thuente line search
with the following options: decrease tolerance $c_1=10^{-4}$
and curvature tolerance $c_2=0.1$
for the Wolfe conditions, starting step length $\lambda=1$, and a
maximum of \num{20} $\Obj/\Grad$ evaluations.
The N-GMRES and O-ACCEL history lengths are set to
$w_{\textnormal{max}}=20$, and the regularization parameter is set to
$\epsilon_0=10^{-12}$.
For $\Precon_{\text{B}}$, the fixed step length is set to $\delta=10^{-4}$.
The L-BFGS history size is set to \num{5}. Larger history sizes were
found by \citet{sterck2013steepest} to be harmful for the L-BFGS
performance on this test set.

Note that our choice of curvature tolerance $c_2=0.1$ is different from
\citet{sterck2013steepest}, where $c_2=0.01$ was used. There are two
reasons for this.
First, our choice is often used in
practice, see \citet[Ch.~3.1]{nocedal2006numerical}, and it
reduces the number of function evaluations for all the solvers
considered.
Second, we are interested in comparing the outer solvers, however, smaller values of $c_2$ moves work from the outer solvers to the
line search algorithms.

We test Problem A-C for both problem sizes $n=100$ and $n=200$.
Problem D is tested with
$n=\num{500},\num{1000},\num{50000},\num{100000}$.
Problem E with $n=\num{100},\num{200},\num{50000},\num{100000}$.
Problem F is called with $n=\num{200},\num{500}$, and finally, Problem
G with $n=\num{100},\num{200}$.
Each combination of problem and problem size is run \num{1000} times, with the
components of the initial guess drawn uniformly random from the interval
$[0,1]$. For Problem C, each instance of the problem generates a new,
random, orthogonal matrix $Q$.
This results in \num{18000} individual tests for the comparison.
To evaluate performance, we count the number of  objective
evaluations required for the algorithms to reach an iterate $x$
such that $\Obj(x)-\Obj^* < 10^{-10}(\Obj(x^{(0)})-\Obj^*)$. A solver run is labelled as failed
if it does not reach tolerance within \num{1500} iterations.
The minimum value $\Obj^*$ is
known for Problems A-F, however for Problem G we estimate $\Obj^*$
using the lowest value attained across all the optimisation
procedures.
The results on the collection of \num{18000} test instances are
discussed in \Cref{subsec:perf_prof}, whilst the Appendix provides
tables of results on the individual problems and problem sizes.

Note that our reporting of the numerical experiments differ from
those of \citet{sterck2013steepest} in two ways: First, we run each problem
combination \num{1000} times, instead of \num{10} times. Second, we
evaluate the results based on performance profiles and tables of
quantiles, instead of solely reporting the average number of
evaluations to reach tolerance.
We believe the high number of test runs is important for more
consistent values of the statistics reported in the Appendix across computers, further stabilised by using
quantiles rather than averages.

\subsection{Performance profiles}\label{subsec:perf_prof}
In order to evaluate the performance of optimizers on test sets with
problems of varying size  and difficulty,
\citet{dolan2002benchmarking} proposed the use of performance
profiles.
For completeness, we first define the performance profile for our chosen
metric of objective evaluations.
Let $\mathcal{P}$ denote the test set of the $n_p=\num{18000}$
problems, and $n_s$ the number of solvers.
For each problem $p\in\mathcal{P}$, and solver $s$, define
\begin{equation}
  t_{p,s} = \text{number of $\Obj$ evaluations required to reach
    tolerance.}
\end{equation}
In the numerical tests we say that the solver has reached tolerance
for the problem when the relative decrease in the objective value is
at least $10^{-10}$, that is
\begin{equation}\label{eq:def_performance_measure}
  t_{p,s} = \min\{k\geq 1 \mid \Obj(x^{(k)})-\Obj^* < 10^{-10}(\Obj(x^{(0)})-\Obj^*)\}.
\end{equation}
\begin{remark}
  Note that the numbers of objective and gradient calls are the same for
  each of the optimizers considered in this manuscript. This is due to
  the use of the  Mor\'{e}-Thuente line search algorithm.
\end{remark}

Let $\underline{t}_p$ denote the lowest number of $\Obj$ evaluations needed
to reach tolerance for problem $p$ across all the solvers,
\begin{equation}
  \underline{t}_p=\min\{t_{p,s}\mid 1\leq s\leq n_s\}.
\end{equation}
The performance ratio measures the performance on problem $p$ by
solver $s$, as defined by
\begin{equation}
  \rho_{p,s}=t_{p,s}/\underline{t}_p.
\end{equation}
The value is bounded below by $1$, and $\rho_{p,s}=1$ for at least one
solver $s$.
If solver $s$ does not solve problem $p$, then we set $\rho_{p,s}=\infty$.
We define the performance profile $p_s:[1,\infty)\to[0,1]$, for solver
$s$, by
\begin{equation}\label{eq:perf_profile}
  p_s(\tau) = \frac{1}{n_p}\mbox{size}\left\{p\in\mathcal{P}\mid \rho_{p,s}\leq \tau\right\}.
\end{equation}
The performance profile for a solver $s$ can be viewed as an
empirical, cumulative ``distribution'' function representing the
probability of the solver $s$ reaching tolerance within a ratio $\tau$
of the
fastest solver for each problem.
In particular, $p_s(1)$ gives the proportion of problems for which
solver $s$ performed best. For large values of $\tau$, the performance
profile $p_s(\tau)$ indicates robustness, that is, what proportion of all
the test problems were solved by the solver.

\begin{figure}[htb]
  \centering
  \includegraphics[width=0.8\textwidth,axisratio=1.7]{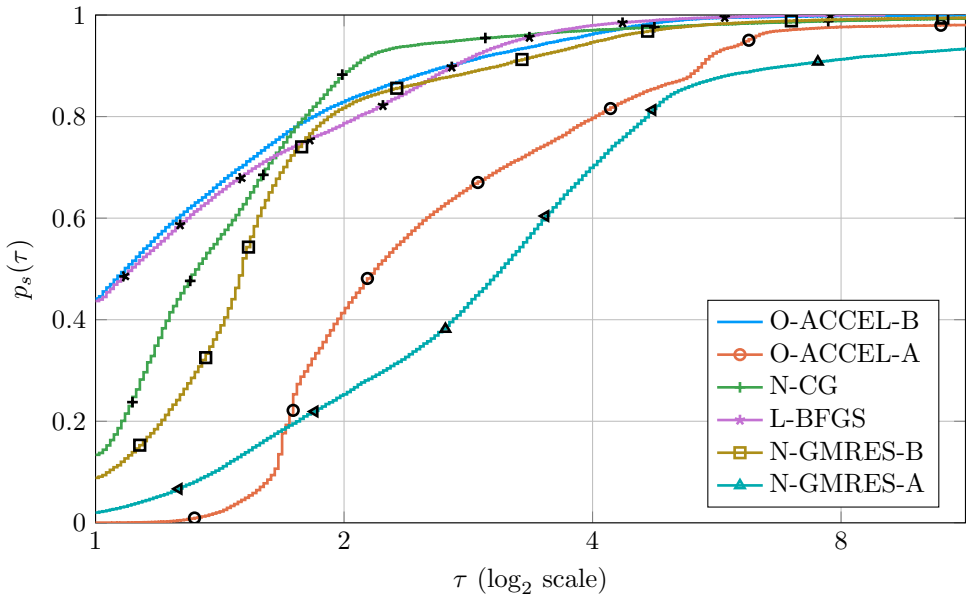}
  \caption{Performance profiles, defined in~\eqref{eq:perf_profile},
    for Problems A--G.
    O-ACCEL preconditioned with a fixed-step steepest descent (B) and
    L-BFGS mostly
    outperform the rest, except for higher factors of $\tau$. They are
    also more robust, solving the largest proportion
    of the problems when the computational budget is large.
  }\label{fig:perf_prof_all}
\end{figure}
\Cref{fig:perf_prof_all} plots the performance profile of the $n_s=6$
solvers considered: N-CG, L-BFGS, and N-GMRES and O-ACCEL with steepest descent
preconditioning using both a line search (A) and a fixed step size (B).
It is clear that O-ACCEL-B and L-BFGS are the  best performers across the test set.
For \SI{44}{\percent} of the test problems they reach tolerance in the fewest
$\Obj$ evaluations, and they also solve the largest proportion of problems
within higher factors $\tau$ of the best performance ratio.
There is also a region where N-CG does particularly well, solving the
largest proportion of problems within two to three times the highest
performing solver.
The worst performers are N-GMRES-A and O-ACCEL-A, mainly due to
the high amount of work that the line search must do to satisfy the
Wolfe conditions along the steepest descent directions.

It is notable that O-ACCEL-B is competitive with L-BFGS on the test
set. Tests, not presented in this work, indicate that the L-BFGS performance
improves by using a line search with
Wolfe curvature condition parameter
$c_2=0.9$, rather than $c_2=0.1$ as used in this manuscript.
The main focus of this manuscript is, however, to investigate the
potential improvement of minimizing the objective rather than
an $\ell_2$ norm of the gradient. Thus, we are more interested in the
comparison between N-GMRES and O-ACCEL.
The two plots in \Cref{fig:perf_prof_nls_nsd} show the performance profiles
comparing N-GMRES and O-ACCEL, and in both cases show a significant
improvement by minimizing the objective. In fact, O-ACCEL reaches
tolerance first on \SIrange{63}{71}{\percent} of the test problems.
The instances where N-GMRES does better is primarily in
Problems E, F, and G, as can be seen from \Cref{tbl:probEG} in the Appendix.
One of the findings of \citet{sterck2013steepest} was that N-GMRES
with line search-steepest descent often stagnated or converged very
slowly. From the left plot of \Cref{fig:perf_prof_nls_nsd}, we see
that this
issue is reduced with the O-ACCEL acceleration. It also
turns out that O-ACCEL-A has a larger success rate over the test
set than N-GMRES-A.
\begin{figure}[htb]
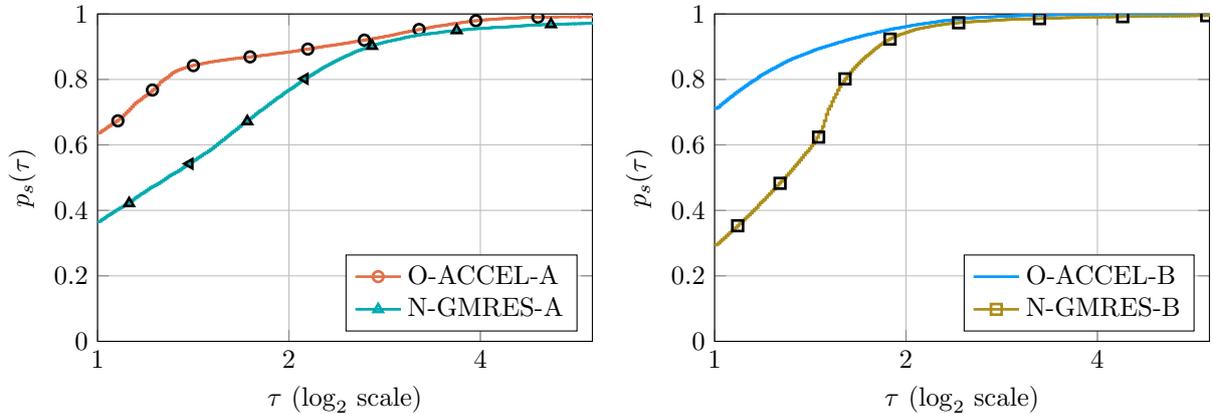

  \centering
  \begin{minipage}{0.49\textwidth}
    \includegraphics[width=\textwidth,axisratio=1.5]{perf_prof_nls}
  \end{minipage}
  \hfill
  \begin{minipage}{0.49\textwidth}
    \includegraphics[width=\textwidth,axisratio=1.5]{perf_prof_nsd}
  \end{minipage}
  \caption{Performance profiles comparing N-GMRES and O-ACCEL with
    steepest descent with line search (A, left) and without (B, right).
    O-ACCEL outperforms N-GMRES in both cases on our test set.
    Note that the lines of O-ACCEL-A and N-GMRES-A cross in
    \Cref{fig:perf_prof_all}, but not in the left figure here, because
    the performance profiles change depending on the set of solvers considered.
  }\label{fig:perf_prof_nls_nsd}
\end{figure}

\subsection{The tensor optimization problem from \citeauthor{sterck2013steepest}}\label{subsec:tensor_optim}
The original motivation for N-GMRES was to improve convergence for a tensor
optimization problem~\cite{sterck2012nonlinear}.
\citet{sterck2012nonlinear} and \citet{sterck2013steepest} show that
using N-GMRES with a  domain-specific ALS
preconditioner is better than generic optimizers such as L-BFGS
and N-CG.
\citet{sterck2013steepest} states that
``In this problem, a rank-three canonical tensor approximation (with 450
variables) is sought for a three-way data tensor of size $50\times
50\times 50$. The data tensor is generated starting from a canonical tensor
with specified rank and random factor matrices that are modified to
have prespecified column colinearity, and noise is added. This is a
standard canonical tensor decomposition test problem
\citep{acar2011scalable}.''
For this manuscript, we run the \num{1000} realisations of the
test problem using the code provided by \citet{sterck2013steepest}
with the parameter values described in
\Cref{subsec:experiment_design}.
The algorithms tested for this problem are vanilla ALS, N-GMRES-ALS,
O-ACCEL-ALS, N-CG, and L-BFGS.
\Cref{fig:perf_prof_tensor_cp_all} and
\Cref{tbl:quantiles_tensor_cp_all} show the performance profiles and
quantiles for the number of $\Obj$ evaluations required to reach
tolerance.
We see that O-ACCEL-ALS and N-GMRES-ALS perform  better than the
other algorithms, which underscores the advantage of applying these
acceleration methods to domain-specific algorithms.
\begin{figure}[htbp]
  \centering
  \begin{subfigure}[b]{0.5\textwidth}
    \includegraphics[width=\textwidth,axisratio=1.2]{perf_prof_tensor_cp_all}
    \caption{Performance profiles}\label{fig:perf_prof_tensor_cp_all}
  \end{subfigure}%
  \begin{subfigure}[b]{0.5\textwidth}
    \centering
    \begin{tabular}{lS[table-auto-round,table-format=4]
      S[table-auto-round,table-format=4]
      S[table-auto-round,table-format=4]}
      \toprule
      Algorithm&{$Q_{0.1}$}&{$Q_{0.5}$}&{$Q_{0.9}$}\\
      \midrule
      ALS&877.5&1107.0&1212.0\\
      \rowcolor[gray]{0.9}
      O-ACCEL-ALS&170.0&227.0&297.0\\
      N-CG&405.0&806.5&2449.5\\
      L-BFGS&301.5&437.5&3036.5\\
      N-GMRES-ALS&169.0&234.0&311.5\\
      \bottomrule
    \end{tabular}
    \vspace{5em}
    \caption{$\Obj$ evaluations}\label{tbl:quantiles_tensor_cp_all}
  \end{subfigure}
  \caption{Numerical results from the tensor optimization test
    problem. O-ACCEL and N-GMRES perform significantly better than the
    other solvers.}
\end{figure}

\subsection{CUTEst test problems}
The test problems we have considered so far were
taken from \citet{sterck2013steepest} and originally used to promote
N-GMRES. We
finish by presenting results from a numerical experiment using
problems from the CUTEst problem set~\cite{gould2015cutest}.  For this
experiment, we compare the solvers O-ACCEL-B, L-BFGS, and N-GMRES-B,
with the parameter values described in
\Cref{subsec:experiment_design}.
The minima are not known for many of the CUTEst problems,
and so we change the tolerance criterion to be defined
in terms of the relative decrease of the gradient norm.
The performance measure used for this experiment is
\begin{equation}\label{eq:def_performance_measure_cutest}
  t_{p,s} = \min\{k\geq 1 \mid \|\Grad(x^{(k)})\|_\infty \leq 10^{-8}\|\Grad(x^{(0)})\|_\infty\}.
\end{equation}
A solver run is labelled as failed if it does not reach tolerance
within \num{2000} iterations.

We run the experiment using
implementations of the solvers from the package Optim
\citep{mogensen2018optim} of the Julia programming language
\citep{bezanson2017julia}. To be sure, we have also verified that the
Optim code yields the same results as the MATLAB code for Problems
A--G.

\begin{figure}[htb]
  \centering
  \begin{minipage}{0.49\textwidth}
    \includegraphics[width=\textwidth,axisratio=1.5]{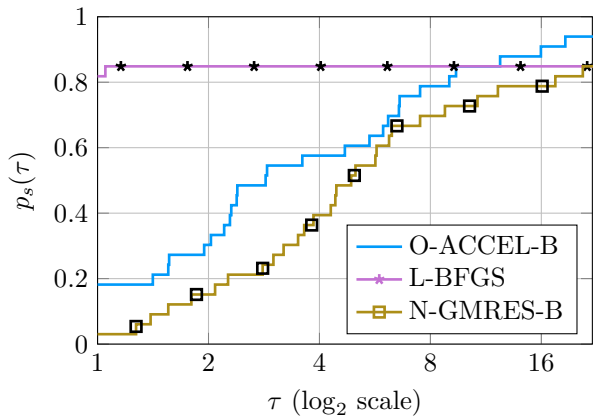}
  \end{minipage}
  \hfill
  \begin{minipage}{0.49\textwidth}
    \includegraphics[width=\textwidth,axisratio=1.5]{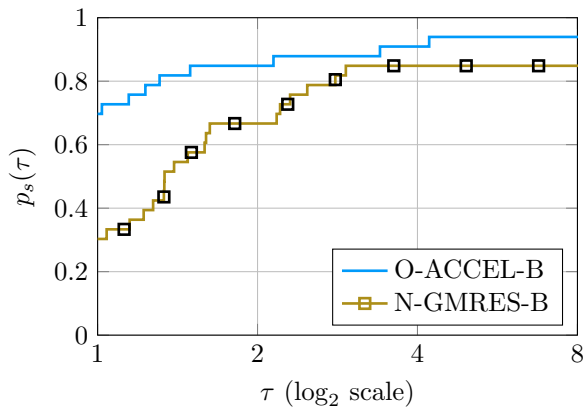}
  \end{minipage}
  \caption{Performance profiles for the CUTEst test problems from
    \Cref{tbl:cutest_table1,tbl:cutest_table2}.
  L-BFGS is the highest performing most of the time, however,
  O-ACCEL-B reaches tolerance for more problems.}\label{fig:perf_prof_cutest}
\end{figure}
The \num{33} problems we consider are listed in
\Cref{tbl:cutest_table1,tbl:cutest_table2} of the appendix together
with the results of the numerical experiment.
We selected the problems with dimension $n=50$ to \num{10000} that
satisfy the two criteria
(i) the objective type is in the category ``other'' (ii) at least one
of the solvers succeed in reaching tolerance.
\Cref{fig:perf_prof_cutest} shows performance profiles from the
experiment. L-BFGS reaches tolerance first for most of the
problems, however, O-ACCEL reaches tolerance within
\num{2000} iterations for more of the test problems. In the problems
where L-BFGS does not reach tolerance it stops because it fails
prematurely, whilst N-GMRES-B only fails due to reaching \num{2000} iterations.
We believe the poorer performance of the acceleration
algorithms for the CUTEst problems, compared to the previous
experiments, is due to the poor performance of the steepest descent
preconditioner on these problems.
Again, O-ACCEL-B performs better than N-GMRES-B, which underscores our claim
that accelerating based on the objective function is better than
accelerating based on the gradient norm.

\section{Conclusion}\label{sec:conclusion}
We have proposed a simple acceleration algorithm
for optimization, based on the nonlinear GMRES (N-GMRES) algorithm by
\citet{washio1997krylov,sterck2013steepest}.
N-GMRES for optimization aims to accelerate a solver
step when solving the nonlinear system $\nabla\Obj(x)=0$ by minimizing the
residual in the $\ell_2$ norm over a subspace from previous iterates.
The acceleration step consists of solving a small linear system that
arises from a linearization of the gradient.

We propose to take advantage of the
structure of the optimization problem and instead accelerate based on the
objective value $\Obj(x)$.
This new approach, labelled O-ACCEL, shows a significant improvement to the original
N-GMRES algorithm in numerical tests when accelerating a steepest descent
solver. The first test problems are taken from
\citet{sterck2013steepest} and run under the same conditions that
proved to be beneficial for N-GMRES.
Further tests on a selection of CUTEst problems strengthen the
conclusion that O-ACCEL outperforms N-GMRES.
Another strength of these acceleration algorithms is that they can be
combined with many types of optimizers. We have seen O-ACCEL's efficiency
with steepest descent, and accelerating quasi-Newton, Newton
methods, and domain-specific methods have potential to reduce costs
for more expensive algorithms.
For example, in \citet{sterck2013steepest} it is shown that N-GMRES significantly
accelerates the alternating least squares algorithm (ALS), which already without
acceleration performs much better than L-BFGS and N-CG on a standard
canonical tensor decomposition problem. Our numerical tests show that
O-ACCEL further improves the ALS convergence for this problem.

There are two particular paths of interest to improve the proposed acceleration
scheme. The
first is to reduce the cost by not using a line search between the proposed steps
by the solver and O-ACCEL. One can instead rely on heuristics along the lines of those
proposed by \citet{washio1997krylov}.
The second is to find better heuristics for choosing previous
iterates to use in the acceleration step. Currently, no choices
are made, other than discarding all iterates when problems appear.
Better guidelines for the number of previous iterates to store is
another topic of interest, especially when memory storage is limited.

We would like to investigate connections between the
proposed O-ACCEL acceleration step and other optimization
procedures, in the same fashion that \citet{fang2009two} put Anderson
acceleration in the context of a family of Broyden-type approximations
of the inverse Jacobian (Hessian).
The preliminary analysis presented in this manuscript shows that, for convex
quadratic objectives, O-ACCEL with a gradient descent preconditioner
is equivalent to FOM for linear systems. As FOM is equivalent to
CG for symmetric positive definite systems, we can
view O-ACCEL in the context of N-CG methods using a larger
history size than usual.
There are many new ideas for improving step directions
based on previous iterates, such as the acceleration scheme by
\citet{damien2016regularized}, and Block BFGS by \citet{gao2016block}.
A better understanding of the overlaps between these and more classical
optimization procedures can provide useful guidance for further research.

Further work is needed to test O-ACCEL on a wider range of problems,
with comparisons to other state-of-the-art implementations of solvers and
accelerators, in order to provide guidance as to when a method is appropriate.
For example, on Problems A--G, O-ACCEL accelerating steepest descent is
superior to N-CG and slightly better than L-BFGS. These results may, however, be
due to implementations from \citet{sterck2013steepest} and test problems favouring the
acceleration algorithms. They are still indicative of the power of
objective value based optimization, a research track that is worth
pursuing further.

\paragraph*{Acknowledgements.}
The author would like to thank Coralia Cartis for her suggestions on
how to present the results in this manuscript, and his supervisors
Jeff Dewynne and Chris Farmer for their input.
Antoine Levitt suggested to look into the connection between O-ACCEL
and FOM.
This work originally
came about from an investigation of N-GMRES for nonlinear solvers for
PDEs with Patrick Farrell~\citep{riseth2015nonlinear}.

\paragraph*{Funding.}
This publication is based on work partially supported by the EPSRC
Centre For Doctoral Training in Industrially Focused Mathematical
Modelling (EP/L015803/1).

\paragraph*{Data access.}
The code used in producing this manuscript is available at
\url{https://github.com/anriseth/objective_accel_code}.
It includes MATLAB implementations of the N-GMRES and O-ACCEL
algorithms, and code to run Problems A--G as well as the tensor
optimization problem.
Implementations of N-GMRES and O-ACCEL are also available in
the optimization package Optim \citep{mogensen2018optim} of the Julia
programming language \citep{bezanson2017julia}.

\appendix

\section*{Appendix: Tables of numerical results}
To supplement the performance profiles in the manuscript, we include
tables that present statistics of the solver performances for the
individual test problems.

\Cref{tbl:probAD,tbl:probEG} show the results from test problems A--G.
Each of the problems were tested with
different sizes $n$, and for each value $n$ the problems were run
\num{1000} times in order to create statistics.
The tables report the \num{0.1}, \num{0.5}, and \num{0.9} quantiles of
$\Obj$ evaluations to reach the objective value
reduction in~\eqref{eq:def_performance_measure}, denoted by $Q_{0.1}$,
$Q_{0.5}$, and $Q_{0.9}$ respectively.
\Cref{tbl:probAD} provides results for problems A--D, and
\Cref{tbl:probEG} for the remaining problems E--G.

\Cref{tbl:cutest_table1,tbl:cutest_table2} show the results from the
CUTEst problems, where ``Fail'' means failure to reach the gradient
value reduction in~\eqref{eq:def_performance_measure_cutest} within
\num{2000} iterations.  The norm used for the gradient values in the
tables is the infinity norm.

\begin{table}[p]
  \sisetup{detect-weight=true,detect-inline-weight=math}
  \centering
  \begin{tabular}{l*{2}{
    S[table-auto-round,table-format=4]
    S[table-auto-round,table-format=4]
    S[table-auto-round,table-format=4]}
    }
    \toprule
    Algorithm
    &\multicolumn{3}{c}{A, $n=100$}
    &\multicolumn{3}{c}{A, $n=200$}\\
    \cmidrule(lr){2-4}\cmidrule(lr){5-7}
    &{$Q_{0.1}$}&{$Q_{0.5}$}&{$Q_{0.9}$}
                &{$Q_{0.1}$}&{$Q_{0.5}$}&{$Q_{0.9}$}\\
    \midrule
    \rowcolor[gray]{0.9}
    O-ACCEL-B&75.0&79.0&81.0&103.0&107.0&111.0\\
    O-ACCEL-A&131.0&136.0&140.0&171.0&179.0&184.0\\
    N-CG&87.0&93.0&99.0&113.0&131.0&145.0\\
    \rowcolor[gray]{0.9}
    L-BFGS&75.0&79.0&81.0&103.0&107.0&111.0\\
    N-GMRES-B&111.0&117.0&122.0&158.0&169.0&192.0\\
    N-GMRES-A&166.0&246.0&335.5&306.5&414.0&510.0\\
    \bottomrule
  \end{tabular}
  \\[0.5em]
  \begin{tabular}{l*{2}{
    S[table-auto-round,table-format=4]
    S[table-auto-round,table-format=4]
    S[table-auto-round,table-format=4]}
    }
    \toprule
    Algorithm
    &\multicolumn{3}{c}{B, $n=100$}
    &\multicolumn{3}{c}{B, $n=200$}\\
    \cmidrule(lr){2-4}\cmidrule(lr){5-7}
    &{$Q_{0.1}$}&{$Q_{0.5}$}&{$Q_{0.9}$}
                &{$Q_{0.1}$}&{$Q_{0.5}$}&{$Q_{0.9}$}\\
    \midrule
    O-ACCEL-B&183.0&267.0&415.5&262.0&364.5&595.0\\
    O-ACCEL-A&258.0&389.0&545.5&377.0&478.0&799.5\\
    N-CG&134.0&211.0&560.0&221.0&359.0&1598.0\\
    \rowcolor[gray]{0.9}
    L-BFGS&76.0&100.0&169.0&99.0&127.0&292.0\\
    N-GMRES-B&215.0&314.5&541.5&317.0&433.0&839.5\\
    N-GMRES-A&272.0&648.0&1515.5&452.0&809.0&2203.5\\
    \bottomrule
  \end{tabular}
  \\[0.5em]
  \begin{tabular}{l*{2}{
    S[table-auto-round,table-format=4]
    S[table-auto-round,table-format=4]
    S[table-auto-round,table-format=4]}
    }
    \toprule
    Algorithm
    &\multicolumn{3}{c}{C, $n=100$}
    &\multicolumn{3}{c}{C, $n=200$}\\
    \cmidrule(lr){2-4}\cmidrule(lr){5-7}
    &{$Q_{0.1}$}&{$Q_{0.5}$}&{$Q_{0.9}$}
                &{$Q_{0.1}$}&{$Q_{0.5}$}&{$Q_{0.9}$}\\
    \midrule
    O-ACCEL-B&112.5&136.0&177.5&151.0&176.0&214.5\\
    O-ACCEL-A&187.5&208.0&258.5&264.0&292.0&324.0\\
    N-CG&165.0&187.0&215.0&259.0&298.0&344.0\\
    \rowcolor[gray]{0.9}
    L-BFGS&104.0&114.0&125.0&147.5&160.0&177.0\\
    N-GMRES-B&142.0&164.0&208.0&219.0&253.5&304.0\\
    N-GMRES-A&264.0&333.0&459.0&508.0&620.0&854.0\\
    \bottomrule
  \end{tabular}
  \\[0.5em]
  \begin{tabular}{l*{2}{
    S[table-auto-round,table-format=4]
    S[table-auto-round,table-format=4]
    S[table-auto-round,table-format=4]}
    }
    \toprule
    Algorithm
    &\multicolumn{3}{c}{D, $n=500$}
    &\multicolumn{3}{c}{D, $n=1000$}\\
    \cmidrule(lr){2-4}\cmidrule(lr){5-7}
    &{$Q_{0.1}$}&{$Q_{0.5}$}&{$Q_{0.9}$}
                &{$Q_{0.1}$}&{$Q_{0.5}$}&{$Q_{0.9}$}\\
    \midrule
    \rowcolor[gray]{0.9}
    O-ACCEL-B&93.0&105.0&123.0&91.0&98.0&116.0\\
    O-ACCEL-A&193.0&233.0&276.5&192.0&233.0&280.0\\
    N-CG&158.0&188.0&196.0&162.0&190.0&197.0\\
    L-BFGS&128.0&155.0&194.0&128.5&153.0&188.5\\
    N-GMRES-B&141.0&163.0&193.0&142.0&167.0&193.0\\
    N-GMRES-A&284.0&349.0&508.0&290.0&349.0&470.5\\
    \midrule
    Algorithm
    &\multicolumn{3}{c}{D, $n=50000$}
    &\multicolumn{3}{c}{D, $n=100000$}\\
    \cmidrule(lr){2-4}\cmidrule(lr){5-7}
    &{$Q_{0.1}$}&{$Q_{0.5}$}&{$Q_{0.9}$}
                &{$Q_{0.1}$}&{$Q_{0.5}$}&{$Q_{0.9}$}\\
    \midrule
    \rowcolor[gray]{0.9}
    O-ACCEL-B&101.0&117.0&132.0&122.0&126.0&135.0\\
    O-ACCEL-A&195.0&226.0&276.0&196.0&225.0&271.0\\
    N-CG&159.0&187.0&196.0&159.0&188.0&196.0\\
    L-BFGS&131.0&156.0&190.0&130.0&156.0&191.0\\
    N-GMRES-B&154.0&178.0&215.0&162.5&190.0&230.5\\
    N-GMRES-A&312.0&378.0&525.0&319.5&394.0&561.5\\
    \bottomrule
  \end{tabular}
  \caption{Quantiles reporting $\Obj$ evaluations to reach
    tolerance for each solver. Grey rows highlight the solver with the
    best \num{0.5} qauntile.
    L-BFGS performs best for the easier problems, whilst O-ACCEL
    handles the difficult problems better. In Problem A, the L-BFGS and
    O-ACCEL performance measures are so similar that the quantiles are
    the same.}\label{tbl:probAD}
\end{table}

\begin{table}[p]
  \centering
  \begin{tabular}{l*{2}{
    S[table-auto-round,table-format=4]
    S[table-auto-round,table-format=4]
    S[table-auto-round,table-format=4]}
    }
    \toprule
    Algorithm
    &\multicolumn{3}{c}{E, $n=100$}
    &\multicolumn{3}{c}{E, $n=200$}\\
    \cmidrule(lr){2-4}\cmidrule(lr){5-7}
    &{$Q_{0.1}$}&{$Q_{0.5}$}&{$Q_{0.9}$}
                &{$Q_{0.1}$}&{$Q_{0.5}$}&{$Q_{0.9}$}\\
    \midrule
    \rowcolor[gray]{0.9}
    O-ACCEL-B&190.0&222.0&265.0&198.0&228.0&273.5\\
    O-ACCEL-A&301.0&349.0&624.5&312.0&371.0&780.5\\
    N-CG&204.5&238.0&283.0&213.0&245.0&290.0\\
    L-BFGS&463.0&626.5&964.5&479.5&638.5&1035.5\\
    N-GMRES-B&231.5&267.0&330.0&235.0&268.0&337.5\\
    N-GMRES-A&280.0&332.0&395.0&284.0&335.0&401.0\\
    \midrule
    Algorithm
    &\multicolumn{3}{c}{E, $n=50000$}
    &\multicolumn{3}{c}{E, $n=100000$}\\
    \cmidrule(lr){2-4}\cmidrule(lr){5-7}
    &{$Q_{0.1}$}&{$Q_{0.5}$}&{$Q_{0.9}$}
                &{$Q_{0.1}$}&{$Q_{0.5}$}&{$Q_{0.9}$}\\
    \midrule
    O-ACCEL-B&368.0&487.0&689.0&399.5&536.0&798.0\\
    O-ACCEL-A&744.5&1157.0&1355.5&763.5&1207.0&1417.0\\
    N-CG&297.0&360.0&461.0&321.0&384.0&490.5\\
    L-BFGS&599.0&703.0&851.5&625.5&725.0&879.0\\
    \rowcolor[gray]{0.9}
    N-GMRES-B&275.0&335.0&738.0&258.0&318.0&848.0\\
    N-GMRES-A&309.5&390.5&561.5&318.0&402.0&609.0\\
    \bottomrule
  \end{tabular}
  \\[0.5em]
  \begin{tabular}{l*{2}{
    S[table-auto-round,table-format=4]
    S[table-auto-round,table-format=4]
    S[table-auto-round,table-format=4]}
    }
    \toprule
    Algorithm
    &\multicolumn{3}{c}{F, $n=200$}
    &\multicolumn{3}{c}{F, $n=500$}\\
    \cmidrule(lr){2-4}\cmidrule(lr){5-7}
    &{$Q_{0.1}$}&{$Q_{0.5}$}&{$Q_{0.9}$}
                &{$Q_{0.1}$}&{$Q_{0.5}$}&{$Q_{0.9}$}\\
    \midrule
    O-ACCEL-B&53.0&71.0&118.0&44.0&55.0&96.5\\
    O-ACCEL-A&81.0&93.0&110.0&84.0&102.0&121.0\\
    \rowcolor[gray]{0.9}
    N-CG&34.0&46.0&60.0&33.0&47.0&69.0\\
    \rowcolor[gray]{0.9}
    L-BFGS&41.0&48.0&56.0&34.0&44.0&51.0\\
    N-GMRES-B&48.0&59.0&110.0&43.0&51.0&88.5\\
    N-GMRES-A&76.0&87.0&99.0&78.0&92.0&107.0\\
    \bottomrule
  \end{tabular}
  \\[0.5em]
  \begin{tabular}{l*{2}{
    S[table-auto-round,table-format=4]
    S[table-auto-round,table-format=4]
    S[table-auto-round,table-format=4]}
    }
    \toprule
    Algorithm
    &\multicolumn{3}{c}{G, $n=100$}
    &\multicolumn{3}{c}{G, $n=200$}\\
    \cmidrule(lr){2-4}\cmidrule(lr){5-7}
    &{$Q_{0.1}$}&{$Q_{0.5}$}&{$Q_{0.9}$}
                &{$Q_{0.1}$}&{$Q_{0.5}$}&{$Q_{0.9}$}\\
    \midrule
    O-ACCEL-B&148.0&211.5&296.0&195.5&224.0&257.5\\
    O-ACCEL-A&301.5&940.0&1078.0&220.0&815.0&956.5\\
    N-CG&76.0&191.0&201.0&53.0&165.0&174.0\\
    \rowcolor[gray]{0.9}
    L-BFGS&66.0&173.0&180.0&53.0&150.0&156.0\\
    N-GMRES-B&161.0&216.0&266.0&166.5&210.0&245.0\\
    N-GMRES-A&528.0&764.0&4518.0&203.0&720.0&4526.0\\
    \bottomrule
  \end{tabular}
  \caption{Quantiles reporting $\Obj$ evaluations to reach
    tolerance for each solver. Grey rows highlight the solver with the
    best \num{0.5} qauntile.
    N-GMRES performs best at the median range for two problems,
    however it is less robust as can be seen from the upper quantile.
  }\label{tbl:probEG}
\end{table}

\begin{table}[p]
  \small
  \centering
  \sisetup{tight-spacing=true,table-auto-round}
\begin{tabular}{*{3}{l}*{2}{S[table-format=4]}
*{2}{S[table-format=-1.1e-2]}c}
  \toprule
  \multicolumn{2}{c}{Problem}
  & Solver & {Iter} & {$\Obj$-calls} & {$\Obj_{\textnormal{min}}$} &
  {$\|\Grad_{\textnormal{min}}\|$} & Fail \\
  \midrule
\multirow{3}{*}{ARWHEAD} & $ n = \num{5000} $  & O-ACCEL-B & 9 & 20 & 0.00e+00 & 2.33e-06 &  \\
 & $\Obj_0 = \num{1.5e+04} $  & L-BFGS & 6 & 21 & 0.00e+00 & 4.82e-07 &  \\
 & ${\|\Grad_0\|} = \num{4.0e+04} $  & N-GMRES-B & 6 & 20 & 2.63e-09 & 2.14e-05 &  \\
\addlinespace
\multirow{3}{*}{BOX} & $ n = \num{10000} $  & O-ACCEL-B & 19 & 86 & -1.86e+03 & 1.95e-10 &  \\
 & $\Obj_0 = \num{0.0e+00} $  & L-BFGS & 7 & 14 & -1.86e+03 & 4.00e-09 &  \\
 & ${\|\Grad_0\|} = \num{5.0e-01} $  & N-GMRES-B & 30 & 105 & -1.86e+03 & 2.19e-09 &  \\
\addlinespace
\multirow{3}{*}{COSINE} & $ n = \num{10000} $  & O-ACCEL-B & 52 & 272 & -1.00e+04 & 7.14e-09 &  \\
 & $\Obj_0 = \num{8.8e+03} $  & L-BFGS & 12 & 22 & -1.00e+04 & 3.10e-09 &  \\
 & ${\|\Grad_0\|} = \num{9.6e-01} $  & N-GMRES-B & 29 & 80 & -1.00e+04 & 1.96e-09 &  \\
\addlinespace
\multirow{3}{*}{CRAGGLVY} & $ n = \num{5000} $  & O-ACCEL-B & 157 & 478 & 1.69e+03 & 4.71e-05 &  \\
 & $\Obj_0 = \num{2.7e+06} $  & L-BFGS & 57 & 133 & 1.69e+03 & 4.01e-05 &  \\
 & ${\|\Grad_0\|} = \num{5.6e+03} $  & N-GMRES-B & 229 & 760 & 1.69e+03 & 3.09e-05 &  \\
\addlinespace
\multirow{3}{*}{DIXMAANA} & $ n = \num{3000} $  & O-ACCEL-B & 48 & 223 & 1.00e+00 & 6.11e-10 &  \\
 & $\Obj_0 = \num{2.9e+04} $  & L-BFGS & 6 & 12 & 1.00e+00 & 9.90e-16 &  \\
 & ${\|\Grad_0\|} = \num{2.8e+01} $  & N-GMRES-B & 13 & 53 & 1.00e+00 & 1.47e-10 &  \\
\addlinespace
\multirow{3}{*}{DIXMAANB} & $ n = \num{3000} $  & O-ACCEL-B & 31 & 99 & 1.00e+00 & 3.16e-08 &  \\
 & $\Obj_0 = \num{4.7e+04} $  & L-BFGS & 6 & 11 & 1.00e+00 & 2.25e-07 &  \\
 & ${\|\Grad_0\|} = \num{4.0e+01} $  & N-GMRES-B & 35 & 228 & 1.00e+00 & 3.79e-10 &  \\
\addlinespace
\multirow{3}{*}{DIXMAANC} & $ n = \num{3000} $  & O-ACCEL-B & 30 & 105 & 1.00e+00 & 4.05e-08 &  \\
 & $\Obj_0 = \num{8.2e+04} $  & L-BFGS & 7 & 14 & 1.00e+00 & 2.27e-08 &  \\
 & ${\|\Grad_0\|} = \num{7.6e+01} $  & N-GMRES-B & 13 & 49 & 1.00e+00 & 4.40e-08 &  \\
\addlinespace
\multirow{3}{*}{DIXMAAND} & $ n = \num{3000} $  & O-ACCEL-B & 23 & 75 & 1.00e+00 & 1.47e-06 &  \\
 & $\Obj_0 = \num{1.6e+05} $  & L-BFGS & 8 & 16 & 1.00e+00 & 2.55e-07 &  \\
 & ${\|\Grad_0\|} = \num{1.5e+02} $  & N-GMRES-B & 16 & 78 & 1.00e+00 & 4.89e-07 &  \\
\addlinespace
\multirow{3}{*}{DIXMAANE} & $ n = \num{3000} $  & O-ACCEL-B & 394 & 1109 & 1.00e+00 & 2.61e-07 &  \\
 & $\Obj_0 = \num{2.2e+04} $  & L-BFGS & 241 & 485 & 1.00e+00 & 2.67e-07 &  \\
 & ${\|\Grad_0\|} = \num{2.7e+01} $  & N-GMRES-B & 885 & 2751 & 1.00e+00 & 2.60e-07 &  \\
\addlinespace
\multirow{3}{*}{DIXMAANF} & $ n = \num{3000} $  & O-ACCEL-B & 282 & 765 & 1.00e+00 & 3.47e-07 &  \\
 & $\Obj_0 = \num{4.1e+04} $  & L-BFGS & 195 & 393 & 1.00e+00 & 3.73e-07 &  \\
 & ${\|\Grad_0\|} = \num{3.9e+01} $  & N-GMRES-B & 567 & 1687 & 1.00e+00 & 3.87e-07 &  \\
\addlinespace
\multirow{3}{*}{DIXMAANG} & $ n = \num{3000} $  & O-ACCEL-B & 277 & 770 & 1.00e+00 & 6.93e-07 &  \\
 & $\Obj_0 = \num{7.6e+04} $  & L-BFGS & 165 & 334 & 1.00e+00 & 6.88e-07 &  \\
 & ${\|\Grad_0\|} = \num{7.5e+01} $  & N-GMRES-B & 570 & 1673 & 1.00e+00 & 7.16e-07 &  \\
\addlinespace
\multirow{3}{*}{DIXMAANH} & $ n = \num{3000} $  & O-ACCEL-B & 236 & 655 & 1.00e+00 & 1.51e-06 &  \\
 & $\Obj_0 = \num{1.5e+05} $  & L-BFGS & 146 & 297 & 1.00e+00 & 1.49e-06 &  \\
 & ${\|\Grad_0\|} = \num{1.5e+02} $  & N-GMRES-B & 620 & 1835 & 1.00e+00 & 1.38e-06 &  \\
\addlinespace
\multirow{3}{*}{DIXMAANK} & $ n = \num{3000} $  & O-ACCEL-B & 862 & 1877 & 1.00e+00 & 7.09e-07 &  \\
 & $\Obj_0 = \num{7.4e+04} $  & L-BFGS & 392 & 787 & 1.00e+00 & 7.06e-07 &  \\
 & ${\|\Grad_0\|} = \num{7.4e+01} $  & N-GMRES-B & 556 & 1640 & 1.00e+00 & 7.17e-07 &  \\
\addlinespace
\multirow{3}{*}{DIXMAANL} & $ n = \num{3000} $  & O-ACCEL-B & 267 & 685 & 1.00e+00 & 1.44e-06 &  \\
 & $\Obj_0 = \num{1.5e+05} $  & L-BFGS & 240 & 485 & 1.00e+00 & 1.47e-06 &  \\
 & ${\|\Grad_0\|} = \num{1.5e+02} $  & N-GMRES-B & 378 & 1096 & 1.00e+00 & 1.35e-06 &  \\
\addlinespace
\multirow{3}{*}{DIXMAANP} & $ n = \num{3000} $  & O-ACCEL-B & 1426 & 3149 & 1.00e+00 & 1.25e-06 &  \\
 & $\Obj_0 = \num{7.1e+04} $  & L-BFGS & 549 & 1102 & 1.00e+00 & 1.25e-06 &  \\
 & ${\|\Grad_0\|} = \num{1.3e+02} $  & N-GMRES-B & 1037 & 3091 & 1.00e+00 & 1.11e-06 &  \\
\addlinespace
\multirow{3}{*}{EDENSCH} & $ n = \num{2000} $  & O-ACCEL-B & 75 & 246 & 1.20e+04 & 1.83e-05 &  \\
 & $\Obj_0 = \num{7.4e+06} $  & L-BFGS & 21 & 45 & 1.20e+04 & 1.96e-05 &  \\
 & ${\|\Grad_0\|} = \num{2.2e+03} $  & N-GMRES-B & 54 & 200 & 1.20e+04 & 7.11e-06 &  \\
\bottomrule
\end{tabular}

  \caption{Results from the CUTEst problems.}\label{tbl:cutest_table1}
\end{table}

\begin{table}[p]
  \small
  \centering
  \sisetup{tight-spacing=true,table-auto-round}
\begin{tabular}{*{3}{l}*{2}{S[table-format=4]}
*{2}{S[table-format=-1.1e-2]}c}
  \toprule
  \multicolumn{2}{c}{Problem}
  & Solver & {Iter} & {$\Obj$-calls} & {$\Obj_{\textnormal{min}}$} &
  {$\|\Grad_{\textnormal{min}}\|$} & Fail \\
  \midrule
\multirow{3}{*}{EG2} & $ n = \num{1000} $  & O-ACCEL-B & 6 & 14 & -9.99e+02 & 1.68e-06 &  \\
 & $\Obj_0 = \num{-8.4e+02} $  & L-BFGS & 3 & 9 & -9.99e+02 & 4.06e-07 &  \\
 & ${\|\Grad_0\|} = \num{5.4e+02} $  & N-GMRES-B & 6 & 14 & -9.99e+02 & 3.64e-06 &  \\
\addlinespace
\multirow{3}{*}{ENGVAL1} & $ n = \num{5000} $  & O-ACCEL-B & 47 & 250 & 5.55e+03 & 8.52e-07 &  \\
 & $\Obj_0 = \num{2.9e+05} $  & L-BFGS & 33 & 366 & 5.55e+03 & 2.75e-06 & $\times$ \\
 & ${\|\Grad_0\|} = \num{1.2e+02} $  & N-GMRES-B & 65 & 318 & 5.55e+03 & 7.60e-07 &  \\
\addlinespace
\multirow{3}{*}{FLETBV3M} & $ n = \num{5000} $  & O-ACCEL-B & 143 & 989 & -2.48e+05 & 4.83e-09 &  \\
 & $\Obj_0 = \num{2.0e+02} $  & L-BFGS & 24 & 62 & -2.49e+05 & 5.12e-10 &  \\
 & ${\|\Grad_0\|} = \num{7.1e-01} $  & N-GMRES-B & 92 & 756 & -2.49e+05 & 5.02e-09 &  \\
\addlinespace
\multirow{3}{*}{FMINSRF2} & $ n = \num{5625} $  & O-ACCEL-B & 1510 & 4384 & 1.00e+00 & 9.58e-09 & $\times$ \\
 & $\Obj_0 = \num{2.8e+01} $  & L-BFGS & 1537 & 3367 & 1.00e+00 & 2.23e-10 &  \\
 & ${\|\Grad_0\|} = \num{2.4e-02} $  & N-GMRES-B & 2000 & 4661 & 1.49e+00 & 9.13e-03 & $\times$ \\
\addlinespace
\multirow{3}{*}{FMINSURF} & $ n = \num{5625} $  & O-ACCEL-B & 1741 & 5007 & 1.00e+00 & 9.72e-09 & $\times$ \\
 & $\Obj_0 = \num{2.9e+01} $  & L-BFGS & 781 & 1672 & 1.00e+00 & 2.21e-10 &  \\
 & ${\|\Grad_0\|} = \num{2.3e-02} $  & N-GMRES-B & 2000 & 4440 & 1.64e+00 & 9.19e-03 & $\times$ \\
\addlinespace
\multirow{3}{*}{NCB20} & $ n = \num{5010} $  & O-ACCEL-B & 526 & 1610 & -1.13e+03 & 3.11e-08 &  \\
 & $\Obj_0 = \num{1.0e+04} $  & L-BFGS & 467 & 1089 & -1.15e+03 & 1.29e-06 & $\times$ \\
 & ${\|\Grad_0\|} = \num{4.0e+00} $  & N-GMRES-B & 2000 & 4093 & -1.14e+03 & 6.81e-01 & $\times$ \\
\addlinespace
\multirow{3}{*}{NCB20B} & $ n = \num{5000} $  & O-ACCEL-B & 1178 & 4121 & 7.35e+03 & 3.25e-08 &  \\
 & $\Obj_0 = \num{1.0e+04} $  & L-BFGS & 1467 & 4173 & 7.35e+03 & 2.97e-05 & $\times$ \\
 & ${\|\Grad_0\|} = \num{4.0e+00} $  & N-GMRES-B & 2000 & 3013 & 7.35e+03 & 2.72e-05 & $\times$ \\
\addlinespace
\multirow{3}{*}{NONDQUAR} & $ n = \num{5000} $  & O-ACCEL-B & 363 & 1044 & 1.86e-04 & 1.67e-04 &  \\
 & $\Obj_0 = \num{5.0e+03} $  & L-BFGS & 208 & 436 & 9.30e-05 & 1.81e-04 &  \\
 & ${\|\Grad_0\|} = \num{2.0e+04} $  & N-GMRES-B & 480 & 1394 & 3.77e-04 & 1.71e-04 &  \\
\addlinespace
\multirow{3}{*}{PENALTY3} & $ n = \num{200} $  & O-ACCEL-B & 283 & 1798 & 1.00e-03 & 1.50e-03 &  \\
 & $\Obj_0 = \num{1.6e+09} $  & L-BFGS & 3 & 15 & 1.58e+09 & 1.59e+05 & $\times$ \\
 & ${\|\Grad_0\|} = \num{1.6e+05} $  & N-GMRES-B & 2000 & 2165 & 2.37e+172 & 2.37e+172 & $\times$ \\
\addlinespace
\multirow{3}{*}{POWELLSG} & $ n = \num{5000} $  & O-ACCEL-B & 115 & 460 & 3.31e-06 & 1.22e-06 &  \\
 & $\Obj_0 = \num{2.7e+05} $  & L-BFGS & 20 & 49 & 4.23e-10 & 5.52e-07 &  \\
 & ${\|\Grad_0\|} = \num{3.1e+02} $  & N-GMRES-B & 105 & 308 & 3.16e-09 & 1.41e-08 &  \\
\addlinespace
\multirow{3}{*}{POWER} & $ n = \num{10000} $  & O-ACCEL-B & 186 & 637 & 4.03e+04 & 1.52e+04 &  \\
 & $\Obj_0 = \num{2.5e+15} $  & L-BFGS & 38 & 107 & 4.93e+04 & 1.48e+04 &  \\
 & ${\|\Grad_0\|} = \num{2.0e+12} $  & N-GMRES-B & 607 & 1868 & 7.46e+04 & 1.35e+04 &  \\
\addlinespace
\multirow{3}{*}{SCHMVETT} & $ n = \num{5000} $  & O-ACCEL-B & 74 & 208 & -1.50e+04 & 1.02e-08 &  \\
 & $\Obj_0 = \num{-1.4e+04} $  & L-BFGS & 55 & 133 & -1.50e+04 & 9.19e-09 &  \\
 & ${\|\Grad_0\|} = \num{1.1e+00} $  & N-GMRES-B & 78 & 239 & -1.50e+04 & 9.29e-09 &  \\
\addlinespace
\multirow{3}{*}{SINQUAD} & $ n = \num{5000} $  & O-ACCEL-B & 78 & 297 & -6.75e+06 & 4.94e-05 &  \\
 & $\Obj_0 = \num{6.6e-01} $  & L-BFGS & 12 & 45 & -6.76e+06 & 1.38e-05 &  \\
 & ${\|\Grad_0\|} = \num{5.0e+03} $  & N-GMRES-B & 95 & 483 & -6.75e+06 & 7.10e-06 &  \\
\addlinespace
\multirow{3}{*}{SPARSQUR} & $ n = \num{10000} $  & O-ACCEL-B & 134 & 598 & 1.69e-07 & 4.35e-06 &  \\
 & $\Obj_0 = \num{1.4e+07} $  & L-BFGS & 26 & 91 & 1.36e-06 & 2.25e-04 &  \\
 & ${\|\Grad_0\|} = \num{3.2e+04} $  & N-GMRES-B & 214 & 797 & 1.08e-06 & 1.82e-05 &  \\
\addlinespace
\multirow{3}{*}{TOINTGOR} & $ n = \num{50} $  & O-ACCEL-B & 201 & 538 & 1.37e+03 & 1.34e-06 &  \\
 & $\Obj_0 = \num{5.1e+03} $  & L-BFGS & 126 & 265 & 1.37e+03 & 9.81e-07 &  \\
 & ${\|\Grad_0\|} = \num{1.5e+02} $  & N-GMRES-B & 287 & 795 & 1.37e+03 & 1.29e-06 &  \\
\addlinespace
\multirow{3}{*}{TOINTPSP} & $ n = \num{50} $  & O-ACCEL-B & 277 & 963 & 2.26e+02 & 6.29e-08 &  \\
 & $\Obj_0 = \num{1.8e+03} $  & L-BFGS & 127 & 334 & 2.26e+02 & 2.89e-07 &  \\
 & ${\|\Grad_0\|} = \num{3.0e+01} $  & N-GMRES-B & 423 & 1287 & 2.26e+02 & 2.51e-07 &  \\
\addlinespace
\multirow{3}{*}{VARDIM} & $ n = \num{200} $  & O-ACCEL-B & 8 & 28 & 6.86e-02 & 1.15e+02 &  \\
 & $\Obj_0 = \num{3.3e+16} $  & L-BFGS & 1 & 20 & 3.26e+16 & 1.94e+15 & $\times$ \\
 & ${\|\Grad_0\|} = \num{1.9e+15} $  & N-GMRES-B & 4 & 39 & 5.33e+03 & 4.98e+05 &  \\
\bottomrule
\end{tabular}

  \caption{Results from the CUTEst tests.}\label{tbl:cutest_table2}
\end{table}

\bibliography{references}

\begin{thebibliography}{24}
\providecommand{\natexlab}[1]{#1}
\providecommand{\url}[1]{\texttt{#1}}
\expandafter\ifx\csname urlstyle\endcsname\relax
  \providecommand{\doi}[1]{doi: #1}\else
  \providecommand{\doi}{doi: \begingroup \urlstyle{rm}\Url}\fi

\bibitem[Acar et~al.(2011)Acar, Dunlavy, and Kolda]{acar2011scalable}
Evrim Acar, Daniel~M Dunlavy, and Tamara~G Kolda.
\newblock A scalable optimization approach for fitting canonical tensor
  decompositions.
\newblock \emph{J Chemom}, 25\penalty0 (2):\penalty0 67--86, 2011.

\bibitem[Anderson(1965)]{anderson1965iterative}
Donald~G. Anderson.
\newblock Iterative procedures for nonlinear integral equations.
\newblock \emph{J ACM}, 12\penalty0 (4):\penalty0 547--560, 1965.

\bibitem[Bezanson et~al.(2017)Bezanson, Edelman, Karpinski, and
  Shah]{bezanson2017julia}
Jeff Bezanson, Alan Edelman, Stefan Karpinski, and Viral~B Shah.
\newblock Julia: A fresh approach to numerical computing.
\newblock \emph{SIAM Rev}, 59\penalty0 (1):\penalty0 65--98, 2017.

\bibitem[Brune et~al.(2015)Brune, Knepley, Smith, and Tu]{brune2015composing}
Peter~R. Brune, Matthew~G. Knepley, Barry~F. Smith, and Xuemin Tu.
\newblock Composing scalable nonlinear algebraic solvers.
\newblock \emph{SIAM Rev}, 57\penalty0 (4):\penalty0 535--565, 2015.

\bibitem[Cartis and Geleta(2017)]{cartis2017accelerating}
Coralia Cartis and Matthew Geleta.
\newblock Accelerating nonlinear optimization algorithms.
\newblock Numerical analysis technical report, University of Oxford, 2017.

\bibitem[De~Sterck(2012)]{sterck2012nonlinear}
Hans De~Sterck.
\newblock A nonlinear {GMRES} optimization algorithm for canonical tensor
  decomposition.
\newblock \emph{SIAM J Sci Comput}, 34\penalty0 (3):\penalty0 A1351--A1379,
  2012.

\bibitem[De~Sterck(2013)]{sterck2013steepest}
Hans De~Sterck.
\newblock Steepest descent preconditioning for nonlinear {GMRES} optimization.
\newblock \emph{Numer Linear Algebra Appl}, 20\penalty0 (3):\penalty0 453--471,
  2013.

\bibitem[De~Sterck and Howse(2016)]{sterck2016nonlinearly}
Hans De~Sterck and Alexander Howse.
\newblock Nonlinearly preconditioned optimization on grassmann manifolds for
  computing approximate tucker tensor decompositions.
\newblock \emph{SIAM J Sci Comput}, 38\penalty0 (2):\penalty0 A997--A1018,
  2016.

\bibitem[Dolan and Mor{\'e}(2002)]{dolan2002benchmarking}
Elizabeth~D. Dolan and Jorge~J. Mor{\'e}.
\newblock Benchmarking optimization software with performance profiles.
\newblock \emph{Math Program}, 91\penalty0 (2):\penalty0 201--213, 2002.

\bibitem[Dunlavy et~al.(2010)Dunlavy, Kolda, and Acar]{dunlavy2010poblano}
Daniel~M. Dunlavy, Tamara~G. Kolda, and Evrim Acar.
\newblock Poblano v1.0: A {MATLAB} toolbox for gradient-based optimization.
\newblock Technical Report SAND2010-1422, Sandia National Laboratories,
  Albuquerque, NM and Livermore, CA, March 2010.

\bibitem[Fang and Saad(2009)]{fang2009two}
Haw-ren Fang and Yousef Saad.
\newblock Two classes of multisecant methods for nonlinear acceleration.
\newblock \emph{Numer Linear Algebra Appl}, 16\penalty0 (3):\penalty0 197--221,
  2009.

\bibitem[Gao and Goldfarb(2016)]{gao2016block}
W.~Gao and D.~Goldfarb.
\newblock Block {BFGS} methods.
\newblock \emph{{arXiv} preprint}, arXiv:1609.00318, 2016.

\bibitem[Gould et~al.(2015)Gould, Orban, and Toint]{gould2015cutest}
Nicholas I.~M. Gould, Dominique Orban, and Philippe~L. Toint.
\newblock {CUTEst}: A constrained and unconstrained testing environment with
  safe threads for mathematical optimization.
\newblock \emph{Comput Optim Appl}, 60\penalty0 (3):\penalty0 545--557, 2015.

\bibitem[Mogensen and Riseth(2018)]{mogensen2018optim}
Patrick~K Mogensen and Asbj{\o}rn~N Riseth.
\newblock Optim: A mathematical optimization package for {Julia}.
\newblock \emph{J Open Source Softw}, 3\penalty0 (24):\penalty0 615, 2018.
\newblock \doi{10.21105/joss.00615}.

\bibitem[Mor{\'e} and Thuente(1994)]{more1994line}
Jorge~J. Mor{\'e} and David~J. Thuente.
\newblock Line search algorithms with guaranteed sufficient decrease.
\newblock \emph{ACM T Math Software}, 20\penalty0 (3):\penalty0 286--307, 1994.

\bibitem[Mor{\'e} et~al.(1981)Mor{\'e}, Garbow, and Hillstrom]{more1981testing}
Jorge~J. Mor{\'e}, Burton~S. Garbow, and Kenneth~E. Hillstrom.
\newblock Testing unconstrained optimization software.
\newblock \emph{ACM T Math Software}, 7\penalty0 (1):\penalty0 17--41, 1981.

\bibitem[Neumaier(1998)]{neumaier1998solving}
Arnold Neumaier.
\newblock Solving ill-conditioned and singular linear systems: A tutorial on
  regularization.
\newblock \emph{SIAM Rev}, 40\penalty0 (3):\penalty0 636--666, 1998.

\bibitem[Nocedal and Wright(2006)]{nocedal2006numerical}
Jorge Nocedal and Stephen Wright.
\newblock \emph{Numerical optimization}.
\newblock Springer Science \& Business Media, 2006.

\bibitem[Oosterlee and Washio(2000)]{oosterlee2000krylov}
Cornelis~W. Oosterlee and Takumi Washio.
\newblock Krylov subspace acceleration of nonlinear multigrid with application
  to recirculating flows.
\newblock \emph{SIAM J Sci Comput}, 21\penalty0 (5):\penalty0 1670--1690, 2000.

\bibitem[Riseth(2015)]{riseth2015nonlinear}
Asbj{\o}rn~N Riseth.
\newblock Nonlinear solver techniques in reservoir management.
\newblock Technical report, University of Oxford, 2015.
\newblock URL
  \url{http://www.pefarrell.org/wp-content/uploads/2015/09/riseth2015.pdf}.

\bibitem[Saad(2003)]{saad2003iterative}
Y.~Saad.
\newblock \emph{Iterative Methods for Sparse Linear Systems}.
\newblock Society for Industrial and Applied Mathematics, Philadelphia, PA,
  USA, 2nd edition, 2003.
\newblock ISBN 0898715342.

\bibitem[Scieur et~al.(2016)Scieur, d'Aspremont, and
  Bach]{damien2016regularized}
Damien Scieur, Alexandre d'Aspremont, and Francis Bach.
\newblock Regularized nonlinear acceleration.
\newblock In D.~D. Lee, M.~Sugiyama, U.~V. Luxburg, I.~Guyon, and R.~Garnett,
  editors, \emph{Adv Neur In}, volume~29, pages 712--720. Curran Associates,
  Inc., 2016.
\newblock URL
  \url{http://papers.nips.cc/paper/6267-regularized-nonlinear-acceleration.pdf}.

\bibitem[Walker and Ni(2011)]{walker2011anderson}
Homer~F. Walker and Peng Ni.
\newblock Anderson acceleration for fixed-point iterations.
\newblock \emph{SIAM J Numer Anal}, 49\penalty0 (4):\penalty0 1715--1735, 2011.

\bibitem[Washio and Oosterlee(1997)]{washio1997krylov}
Takumi Washio and Cornelis~W. Oosterlee.
\newblock Krylov subspace acceleration for nonlinear multigrid schemes.
\newblock \emph{Electron T Numer Ana}, 6:\penalty0 271--290, 1997.

\end{thebibliography}
\ifnlaa
\bibliographystyle{vancouvernat} 
\else
\bibliographystyle{plainnat}
\fi
\end{document}
